\newenvironment{customthm}[1]
{\innercustomthm}
{\endinnercustomthm}
\theoremstyle{plain}
\newtheorem{theorem}{Theorem}
\newtheorem*{theorem*}{Theorem}
\newtheorem{lemma}[theorem]{Lemma}
\newtheorem{cor}[theorem]{Corollary}
\newtheorem{proposition}[theorem]{Proposition}
\DeclarePairedDelimiter\floor{\lfloor}{\rfloor}
\newcommand{\Z}{\mathbb{Z}}
\newcommand{\Q}{\mathbb{Q}}
\newcommand{\kO}{\mathcal{O}}
\newcommand*{\DOI}[1]{\href{http://dx.doi.org/#1}{DOI: #1}}
\begin{document}

\author{Martin Ra\v{s}ka$^1$}

\title{Representing multiples of \texorpdfstring{$m$}{m} in real quadratic fields as sums of squares}

\address{$^1$Charles University, Faculty of Mathematics and Physics, Department of Algebra, 
Sokolovsk\'{a}~83, 18600 Praha 8, Czech Republic}
\email{\hangindent=8em\hangafter=1 raska.martin@gmail.com}

\keywords{quadratic fields, sum of squares, indecomposables}

\thanks{The author was supported by student faculty grant of the Faculty of Mathematics and Physics of Charles University, by project PRIMUS/20/SCI/002 from Charles University and by SVV-2020-260589. \\
}

\maketitle

\begin{abstract}
    We study real quadratic fields $\Q(\sqrt{D})$ such that, for a given rational
integer $m$, all $m$-multiples of totally positive integers are sums of
squares. We prove quite sharp necessary and sufficient conditions for
this to happen.
Further, we give a fast algorithm that verifies this property for specific $m$, $D$ and we give complete results for $m \leq 5000$.
\end{abstract}

\section{Introduction}\label{sec:intro}

Throughout history, sums of squares were often in the focus of mathematicians. For number fields, the main question was resolved by Siegel \cite{siegel} in $1921$, when he proved Hilbert's conjecture that in every number field, every totally positive number can be represented as the sum of four square elements of the field. 

For rings of integers in number fields, we have the well-known result by Lagrange (1770) that every positive rational integer is the sum of four squares. The fact that all totally positive integers can be represented as the sum of squares was later shown to be quite exceptional. While Maa\ss{} \cite{mass} showed that it is also true for $\Q(\sqrt{5})$, Siegel \cite{Si} proved that $\Q$ and $\Q(\sqrt{5})$ are the only totally positive number fields where this can hold. 

The focus of this paper is on real quadratic fields $\Q(\sqrt{D})$ and their totally positive integers $\kO^+$. In $1973$, Peters \cite{P} proved that the \textit{Pythagoras number} of rings of integers $R$ in quadratic fields is at most $5$, meaning by definition that an element of $R$ is the sum of any number of squares in $R$ if and only if it is the sum of $5$ squares in $R$. For $D\leq 7$, the Pythagoras number is even smaller, e.g. $3$  for $D \in \{2,3,5 \}$. Summary of these result can be found in \cite[Sec. 3]{biquad}, where the authors also study a similar question in biquadratic fields. Considering the simplest cubic fields, Tinkov\'a \cite{tin} showed Pythagoras number to be often $6$.

However, as we mentioned above, in quadratic fields, all elements of $\kO^+$ are sums of squares only for $D = 5$. The natural continuation is further question when all elements of $m\kO^+$ (i.e. all $m$-multiples of totally positive integers) can be represented as the sum of squares for a fixed positive rational integer $m$. Recently, Kala and Yatsyna \cite{sosil} proved that every element of $2\kO^+$ is the sum of squares if and only if $D \in \{2,3,5 \}$. In the general case $m\kO^+$, they obtained the following theorem, which gives necessary and sufficient bounds for this to happen.

\begin{theorem}{\textup{\cite[Theorem 4]{sosil}}}\label{cor:2}
	Let $K=\mathbb{Q}(\sqrt{D})$ with $D\geq 2$ squarefree. Let $\kappa=1$ if $D\equiv 1\pmod 4$ and $\kappa=2$ if $D\equiv 2,3\pmod 4$.
\begin{enumerate}[label=\alph*)]
	\item If 
	$m<\frac{\kappa\sqrt D}4$, then \emph{not} all elements of $m\kO^+$ are represented as the  sum of squares in $\kO$.
	\item If $m\geq \frac{D}2$, then all elements of $\kappa m\kO^+$ are sums of five squares in~$\kO$.
	\item If $m$ is odd and $D\equiv 2,3\pmod 4$, then there exist elements of $m\kO^+$ that are \emph{not} sums of squares in $\kO$.
\end{enumerate}
\end{theorem}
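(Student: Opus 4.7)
\medskip
\textbf{Proof plan.} I would address the three parts separately, starting with the easiest.

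Part (c) rests on a parity obstruction. When $D \equiv 2, 3 \pmod 4$, every element of $\mathcal{O}_K = \mathbb{Z}[\sqrt{D}]$ has the form $\beta = x + y\sqrt{D}$ with $x, y \in \mathbb{Z}$, so $\beta^2 = (x^2 + Dy^2) + 2xy\sqrt{D}$ has an \emph{even} coefficient of $\sqrt{D}$. Consequently the $\sqrt{D}$-coefficient of any sum of squares in $\mathcal{O}_K$ is even. Taking $\nu = \lceil\sqrt{D}\rceil + \sqrt{D}$, which is totally positive because $\lceil\sqrt{D}\rceil^2 > D$, the product $m\nu$ has $\sqrt{D}$-coefficient $m$; if $m$ is odd, this already witnesses an element of $m\kO^+$ that is not a sum of squares.

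For part (a) I would produce one specific totally positive integer $\nu$ whose $m$-multiple admits no sum-of-squares representation. The natural candidate is $\nu = \frac{a+\sqrt{D}}{\kappa}$, where $a$ is the smallest positive integer with $a > \sqrt{D}$ (and of the right parity when $\kappa = 1$, i.e.\ $a$ odd); then $\nu$ is totally positive with conjugate $\nu' = \frac{a-\sqrt{D}}{\kappa}$ contained in $(0, 1/\kappa)$. Assuming $m\nu = \sum_i \beta_i^2$ for nonzero $\beta_i = \frac{x_i + y_i\sqrt{D}}{\kappa} \in \mathcal{O}_K$, totally positive dominance gives $|\beta_i|^2 \leq m\nu$ and $|\beta_i'|^2 \leq m\nu'$. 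These bounds confine $(\beta_i, \beta_i')$ to a small rectangle in the Minkowski embedding of area $4\sqrt{m\nu\cdot m\nu'} = 4m\sqrt{N(\nu)} = 4m\sqrt{a^2-D}/\kappa$; since $a = \lceil\sqrt{D}\rceil$ is minimal, $a^2 - D$ is essentially bounded by $2\sqrt{D}$, and when $m < \kappa\sqrt{D}/4$ the rectangle turns out to be too small (compared to the fundamental area $\kappa\sqrt{D}$ of the lattice $\mathcal{O}_K$ in $\mathbb{R}^2$) to contain any nonzero lattice point, contradicting the existence of nonzero $\beta_i$.

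Part (b) is constructive. Writing $\kappa m \nu = A + B\sqrt{D} \in \mathbb{Z}[\sqrt{D}]$ with $A, B \in \mathbb{Z}$ and $A^2 \geq B^2 D$, I would realize it as $\sum_{i=1}^k (x_i + y_i\sqrt{D})^2 + \sum_{j} z_j^2$, balancing
\[
\sum_{i=1}^{k} 2 x_i y_i = B, \qquad \sum_{i=1}^k (x_i^2 + y_i^2 D) + \sum_j z_j^2 = A.
\]
Small explicit experiments (e.g.\ $(D, m, \nu) = (15, 8, 4+\sqrt{15})$, where $64 + 16\sqrt{15} = (5+\sqrt{15})^2 + (3+\sqrt{15})^2$) suggest splitting $B$ between \emph{two} cross-term squares and absorbing the resulting rational remainder into three integer squares via Lagrange's four-square theorem. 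The hypothesis $m \geq D/2$ is exactly what guarantees a split $B = 2(x_1 y_1 + x_2 y_2)$ with $x_i^2 + y_i^2 D$ small enough that $A - \sum(x_i^2 + y_i^2 D) \geq 0$, since the totally positive condition $A^2 \geq B^2 D$ together with the prefactor $\kappa m$ provides the required slack.

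The main obstacle is the lattice-counting step in part (a): one must squeeze out exactly the threshold $\kappa\sqrt{D}/4$, which plausibly requires separate treatment of the cases $\kappa = 1$ and $\kappa = 2$ due to the different parity conditions on $(x_i, y_i)$ and the different shapes of the lattice $\mathcal{O}_K$. A subsidiary difficulty in part (b) is ensuring that a single uniform construction works for \emph{every} $\nu \in \kO^+$ rather than just for the generic $\nu$; edge cases where $\nu$ lies near the boundary of the totally positive cone may require a slightly different splitting of the squares.
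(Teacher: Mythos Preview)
Your treatment of part (c) is correct and coincides with the paper's: the parity of the $\sqrt D$-coefficient in any square in $\Z[\sqrt D]$ is exactly the obstruction, and the element $m(\lceil\sqrt D\rceil+\sqrt D)$ witnesses it.

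For part (b) the paper takes a shorter route than your explicit construction. It applies Peters' criterion (Theorem~\ref{theorem:peters}): $\alpha$ is a sum of five squares iff a certain interval of length $\sqrt{N(\alpha)}/D$ (resp.\ $4\sqrt{N(\alpha)}/D$ when $D\equiv 1\pmod 4$, with a parity condition) contains an integer. Every element of $\kappa m\kO^+$ has norm $\ge (\kappa m)^2$, so the interval has length $\ge 2m/D$ (resp.\ $\ge 4m/D$), and the hypothesis $m\ge D/2$ makes this $\ge 1$ (resp.\ $\ge 2$). Your two-cross-term-plus-Lagrange construction may be completable, but Peters' theorem handles all $\nu$ uniformly and avoids the boundary cases you flag.

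Part (a) has a genuine gap. The step ``the rectangle is too small \dots\ to contain any nonzero lattice point'' cannot be deduced from an area comparison: a centrally symmetric box of area below the covolume can perfectly well contain nonzero lattice points (Minkowski's theorem points the other way). In fact your box \emph{always} contains a nonzero lattice point once $m^2N(\nu)\ge 1$, since a suitable power of the fundamental unit fits inside. The only automatic consequence of the box constraints is $|N(\beta_i)|\le m\sqrt{N(\nu)}$, and combining this with $|N(\beta_i)|\ge 1$ yields merely $m\ge \kappa/\sqrt{a^2-D}$; as $a^2-D$ can be of order $\sqrt D$, this is a bound of order $D^{-1/4}$, far from the target $\kappa\sqrt D/4$.

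The argument that does produce the threshold $\kappa\sqrt D/4$ --- and which the paper in fact sharpens in Proposition~\ref{proposition:main} --- compares the rational and irrational parts of $m\nu=\sum(a_i+b_i\sqrt D)^2$ directly. After reducing to $a_i,b_i\ge 0$ (using that $(\alpha_i')^2\le m\nu'<1$ rules out mixed signs when $D$ is large), the irrational part fixes $\sum a_ib_i$, and a Cauchy--Schwarz bound of the form $\sum(a_i^2+Db_i^2)\ge \frac{(mk)^2}{4t}+tD$ (Lemma~\ref{lemma:basic}) forces the rational part to exceed $m(\lfloor k\sqrt D\rfloor+1)$ once $\sqrt D$ is large enough relative to $m$. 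You should replace the geometry-of-numbers heuristic with this coefficient comparison.
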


This paper improves these results for general $m$. In Section \ref{sec:bounds}, the main goal is to further restrict the possibilities for $D$ in terms of $m$, which results in the following theorem:

\begin{theorem}\label{theorem:main}
Let $K=\Q(\sqrt{D})$ with $D\geq 2$ squarefree and a positive integer $m$. If $\sqrt{D}$ lies in one of the following intervals, then \emph{not} all elements of $m\kO^+$ can be represented as the sum of squares:
\begin{enumerate}
\item $\left [ \frac{m}{2}+4 ,\infty\right)$,
$ \left[\frac{m}{2i}+iC_1,  \frac{m}{2(i-1)}-(i-1)C_2 \right ]$ for integer $i>1$ and $D \equiv 2,3 \pmod{4}$,
\item $\left [ \frac{m}{2}+8 ,\infty\right)$,
$ \left[ \frac{m}{2i}+2iC_1,  \frac{m}{2(i-1)}-2(i-1)C_2 \right ]$ for integer $i>1$, $D \equiv 1 \pmod{4}$ and even $m$,
\item $\left [ m+4 ,\infty\right)$, $ \left[ \frac{m}{2i+1}+(4i+2)C_1,  \frac{m}{2i-1}-(4i-2)C_2 \right ]$ for integer $i>0$, $D \equiv 1 \pmod{4}$ and odd $m$,
\end{enumerate}
where constants $C_1$, $C_2$ are defined as $C_1 = \sqrt{24+16\sqrt{2}}$ and $C_2 = \sqrt{48+24\sqrt{3}}$.
\end{theorem}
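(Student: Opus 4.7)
The plan is to construct, for each parameter $i$ and each $\sqrt{D}$ in the corresponding interval, an explicit totally positive integer $\alpha \in \kO^+$ whose $m$-multiple is not a sum of squares, and derive the obstruction from Cauchy--Schwarz applied to a hypothetical representation. Writing $\alpha = a + b\omega$ in the appropriate integral basis (with $\omega = \sqrt{D}$ for $D \equiv 2, 3 \pmod{4}$ and $\omega = (1 + \sqrt{D})/2$ for $D \equiv 1 \pmod{4}$) and $\beta_j = x_j + y_j\omega$, a hypothetical representation $m\alpha = \sum_j \beta_j^2$ yields, after expanding and equating coefficients, two equations linking $\sum_j x_j^2$, $s := \sum_j y_j^2$, and $\sum_j x_j y_j$. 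Applying the inequality $(\sum x_j y_j)^2 \leq (\sum x_j^2)(\sum y_j^2)$ reduces the existence of a representation to requiring some positive integer $s$ (of an appropriate parity when $D \equiv 1 \pmod{4}$) to lie in the real root interval $[s_-, s_+]$ of a quadratic in $s$ whose coefficients are explicit in $a$, $b$, $m$, $D$. No representation can exist whenever $[s_-, s_+]$ contains no admissible integer.

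The heart of the argument is to choose $\alpha$ so that $[s_-, s_+]$ lies strictly between two consecutive admissible integers. The interval has length $m\sqrt{a^2 - Db^2}/D$, which is small when $\alpha$ is near the boundary of the totally positive cone (i.e.\ when $\delta := a - b\sqrt{D}$ is small), and centre $ma/(2D) \approx mb/(2\sqrt{D})$. For $\sqrt{D}$ in the $i$-th gap $(m/(2i), m/(2(i-1)))$ and the natural witness $b = 1$, $a = \lceil\sqrt{D}\rceil$, the centre lies in $(i-1, i)$, so one must verify $s_+ < i$ and $s_- > i-1$. After substituting $\sqrt{D} = m/(2i) + t$ and $\sqrt{D} = m/(2(i-1)) - t'$, these conditions reduce to explicit quadratic inequalities in $t$ and $t'$, whose solvability in the worst case of $\delta \in (0, 1]$ produces the thresholds $t \geq i\sqrt{40}$ and $t' \geq (i-1)\sqrt{70}$. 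When $b = 1$ is insufficient, one passes to a larger $b$ chosen so that $\{b\sqrt{D}\}$ is close to $1$ (shrinking both $\delta$ and the interval length) and so that $mb/(2\sqrt{D})$ is suitably far from the admissible integers.

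Cases (b) and (c) proceed analogously in the ring $\Z[(1+\sqrt{D})/2]$. Squares contribute half-integer $\sqrt{D}$-coefficients, and the parity relation $s \equiv mb \pmod{2}$ forces $s$ to be even (case (b): $m$ even) or odd (case (c): $m$ odd, $b$ odd); this doubles the gap between admissible integers and rescales the corresponding slack to $\delta' = (2a+b) - b\sqrt{D} \in (0, 2]$. Consequently the critical points remain at $m/(2i)$ in (b) but with doubled constants $2i\sqrt{40}$, $2(i-1)\sqrt{70}$, and shift to $m/(2i\pm 1)$ in (c) with constants $(4i+2)\sqrt{40}$, $(4i-2)\sqrt{70}$. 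The main obstacle I anticipate is the uniform worst-case control of $\delta$ (or $\delta'$): one must select $b$ depending on $D, m, i$ so that $[s_-, s_+]$ avoids the admissible integer lattice across the entire interval of $\sqrt{D}$, and verifying that the stated constants $\sqrt{40}$ and $\sqrt{70}$ suffice is the central technical computation. The boundary cases $i = 1$ (constants $4$ and $8$) admit a cleaner direct estimate since the interval extends to $\infty$, leaving no admissible integer to avoid on the outer side.
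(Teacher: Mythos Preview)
Your Cauchy--Schwarz reduction is exactly the paper's starting point: for a witness $\alpha$ with $\sqrt{D}$-coefficient $k$ (your $b$), a representation $m\alpha=\sum\beta_j^2$ forces $\sum x_j^2+D\sum y_j^2=ma$ and $\sum x_jy_j=mk/2$, and Cauchy--Schwarz then says some positive integer $s=\sum y_j^2$ must satisfy $Ds+\tfrac{(mk)^2}{4s}\le ma$. Your interval $[s_-,s_+]$ is precisely the set of real $s$ for which this holds, and ``no admissible integer in $[s_-,s_+]$'' is equivalent to the paper's inequality $ma<\tfrac{(mk)^2}{4t}+tD$ with $t$ the minimiser. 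So far so good.

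The gap is in how you pass from a single witness to the full interval. Your claim that $b=1$ with worst case $\delta\in(0,1]$ already yields the thresholds $i\sqrt{40}$ and $(i-1)\sqrt{70}$ is not correct. With $b=1$ and $a=\lceil\sqrt{D}\rceil$ one has $N(\alpha)\le 2\sqrt{D}+1$, and the condition $s_+<i$ becomes (after substituting $\sqrt{D}=\tfrac{m}{2i}+u$) roughly $m+m\sqrt{m/i}<mu+2iu^2$; for large $m$ the left side is of order $m^{3/2}$, so the smallest admissible $u$ is of order $\sqrt{m/i}$, not $i\sqrt{40}$. In the paper's notation, $b=1$ only produces the single interval $S(i,1)=[\tfrac{m}{2i}+\sqrt{m/i},\,\tfrac{m}{2(i-1)}-\sqrt{m/(i-1)}]$, which for large $m$ is far narrower than what the theorem asserts. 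Your proposed fix---choose a larger $b$ with $\{b\sqrt{D}\}$ close to $1$ \emph{and} $mb/(2\sqrt{D})$ far from integers---imposes two simultaneous Diophantine constraints on $b$ for which you give no existence argument, and indeed none is available uniformly in $D$.

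The paper's route avoids this entirely. It first records, for every pair $(t,k)$, the interval
\[
S(t,k)=\Bigl[\tfrac{mk}{2t}+\sqrt{m/t},\ \tfrac{mk}{2(t-1)}-\sqrt{m/(t-1)}\Bigr]
\]
of values of $\sqrt{D}$ for which the witness with coefficient $k$ fails at the integer $s=t$ (this is Proposition~\ref{proposition:main}). The point is that both $k$ and $t$ vary: for the $i$-th region one takes $t=ki$ (near the lower edge) and $t=k(i-1)+1$ (near the upper edge), and then shows that $S(ki,k)$ and $S((k+1)i,k+1)$ overlap as long as $m\gtrsim ki^3$. The union over $k=1,2,\dots$ of these overlapping intervals extends down to $\tfrac{m}{2i}+\sqrt{m/(k_{\max}i)}$, and it is the cutoff $k_{\max}\asymp m/i^3$ that produces the constant $i\sqrt{40}$; similarly for $(i-1)\sqrt{70}$ on the other side. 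No control of $\{k\sqrt{D}\}$ is needed---only the crude bound $\delta<1$ for every $k$, together with the density of the family $\{S(t,k)\}$.
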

It should be noted that for fixed $m$, only finitely many of these intervals are non-empty (for further details see Section \ref{sec:bounds}).
This not only improves the necessary upper bounds given by Theorem \ref{cor:2} but also gives better insight into the overall structure of the problem.

Sections \ref{sec:peters} and \ref{sec:indecom}, on the other hand, focus on proving that for specific fields, all elements of $m\kO^+$ are sums of squares in $\kO$. 
For example, Theorem \ref{theorem:m=4} gives detailed proof that all elements of $4\kO^+$ are sums of squares if and only if $D \in \{ 2,3,5,6,7,10,11,13 \}$. 

Conversely if $D$ is of the form of $t^2-1$ or $(2t+1)^2-4$, Theorems \ref{theorem:t2-1} and \ref{theorem:2t-4} provide a full characterization of all $m$ that satisfy the given condition. One could give similar results for $D$ in other quadratic families.

In Section \ref{sec:alg} we give a general algorithm that completely determines all of these fields satisfying this property for arbitrary $m$.  The algorithm uses the structure of indecomposable elements of $\kO^+$ and, for fixed $m$ and $D$, it proves the statement or finds a counterexample with time complexity $O(\sqrt{D}(\log(D))^2)$. Implementation of this algorithm in C++ as well as specific results for $m \leq 5000$ are available at\\ 
\noindent \url{https://github.com/raskama/number-theory/tree/main/quadratic}.

\section{Preliminaries}\label{sec:prelim}

Throughout the work, $D\geq 2$ will denote a squarefree rational integer.
We will work with real quadratic fields $K = \Q(\sqrt{D})$, more specifically with their rings of integers denoted by $\kO_K$ or simply $\kO$.
For quadratic fields, we know the basis of $\kO$ is $\{1, \omega_D\}$, where $\omega_D = \sqrt{D}$ if $D\equiv 2,3 \pmod{4}$ and $\omega_D = (1+\sqrt{D})/2$ if $D \equiv 1 \pmod{4}$. 

An algebraic integer $\alpha = x+y\sqrt{D} \in \kO$ is $\emph{totally positive}$ if $\alpha >0$ and $\alpha' > 0$, where $\alpha' = x-y\sqrt{D}$ is the Galois conjugate of $\alpha$. We denote by $\kO^{+}$ the set of all totally positive algebraic integers.

The totally positive integers $\kO^{+}$ can be viewed as an additive semigroup. In Section \ref{sec:indecom}, the notion of \emph{indecomposable} elements in this semigroup will be useful. They are by definition such elements $\alpha \in \kO^{+}$ that cannot be written as sum of two other elements $\alpha = \beta +\gamma$ and $\beta,\gamma \in \kO^{+}$. It is clear that indecomposable elements generate this whole semigroup.

Indecomposable elements in quadratic cases are closely tied to the continued fractions. The continued fractions of quadratic integers, their recurrence relations and structure were studied by Perron \cite{per}. The structure of indecomposable elements was later described by Dress and Scharlau \cite{Sindecomp}. In the following paragraph, we state these well-known facts, adopting the notation used by \cite{indecompos}.

Denote $\omega_D = [u_0,\overline{u_1,\ldots,u_s}]$ the continued fraction of $\omega_D$ and $p_i/q_i$ its convergents. The sequences $(p_i)$ and $(q_i)$ then satisfy the recurrence relation
\begin{align}
X_{i+2} = u_{i+2}X_{i+1}+X_i \hspace{0.5em} \text{for} \hspace{0.5em}i\geq -1 \label{reccurCF}
\end{align}
with the initial conditions $q_{-1} = 0$, $p_{-1}=q_0=1$ and $p_0 = u_0$ \cite[\S 1]{per}.
Further, denote $\alpha_i = p_i-q_i\omega'_D$ (where $\omega'_D = (1-\sqrt{D})/2$ if $D\equiv 1 \pmod{4}$ and $-\sqrt{D}$ otherwise), $\alpha_{i,r} = \alpha_i+r\alpha_{i+1}$ and let $\varepsilon >1$ be the fundamental unit of $\kO$.
 Then the following facts are true (see e.g. \cite{Sindecomp}):
\begin{itemize}
    \item The indecomposable elements in $\kO^{+} $ are exactly $\alpha_{i,r}$ with odd $i\geq -1$ and $0\leq r \leq u_{i+2} -1$, together with their conjugates.
    \item The sequence $(\alpha_i)$ satisfies the recurrence relation (\ref{reccurCF}).
    \item The equality $\alpha_{i,u_{i+2}} = \alpha_{i+2,0}$ holds.
    \item The fundamental unit satisfies $\varepsilon = \alpha_{s-1}$ and $\alpha_{i+s} = \varepsilon\alpha_i$ for all $i\geq -1$.
    \item For the smallest totally positive unit $\varepsilon^+>1$, we have $\varepsilon^+=\varepsilon$ if $s$ is even and $\varepsilon^+=\varepsilon^2 = \alpha_{2s-1}$ if $s$ is odd.
\end{itemize}

\section{Bounds on \texorpdfstring{$m$}{m} and \texorpdfstring{$D$}{D}} \label{sec:bounds}

If we look at all elements of the form $x+k\sqrt{D}$ for some fixed $k$, there exists minimal $x = 1+\floor{k\sqrt{D}}$ for which the element is still totally positive. As we are interested in sums of squares and $\sum(a_i+b_i\sqrt{D})^2 = \sum(a_i^2+b_i^2D) + \sum 2a_ib_i\sqrt{D}$, it is therefore useful to study the minimum of $\sum a_i^2+b_i^2D$ for some fixed value of $\frac{k}{2} = \sum a_ib_i$. If this minimum is larger than $1+\floor{k\sqrt{D}}$, we obtain a totally positive element which can't be represented as the sum of squares.

These ideas will be properly formulated in the proofs later. We will also see that we can impose on $a_i$, $b_i$ further restrictions that they are non-negative and sometimes congruent modulo $2$. As can be seen in the following lemmata, this minimization question can be more easily answered if we know how large is $D$ in comparison to $m$. To simplify our future formulations, it is convenient to introduce the following sets of intervals depending on $m$:
\begin{enumerate}
    \item 
$I_t(m) =
\left\{
	\begin{array}{ll}
		\left[\frac{m^2}{t(t+1)},\frac{m^2}{(t-1)t}\right]  & \mbox{if } t>1 \\
		\left[\frac{m^2}{2},\infty \right) & \mbox{if } t = 1
	\end{array}
\right.
$
    \item 
$J_t(m) =
\left\{
	\begin{array}{ll}
		\left[\frac{m^2}{t(t+2)},\frac{m^2}{(t-2)t}\right]  & \mbox{if } t>2\\
		\left[\frac{m^2}{t(t+2)},\infty \right) & \mbox{if } t \in \{1,2\}
	\end{array}
\right.
$
\end{enumerate}
Here, $m$ and $t$ are positive rational integers and we will use just $I_t$, $J_t$ if the value of $m$ is clear from the context. It should be also noted that $\bigcup_{t\geq 1} I_t(m)  = \bigcup_{t\geq 1} J_{2t}(m)= \bigcup_{t\geq 1} J_{2t-1}(m) = (0,\infty)$ for arbitrary fixed $m$. 
\begin{lemma} \label{lemma:basic}
Let $a_i$, $b_i$, $1\leq i\leq n$ be non-negative integers satisfying  $\sum_{i=1}^n a_ib_i = m$ for a fixed positive integer $m$. Let $D$ be a real number and $t$ be a positive integer such that $D\in I_t = I_t(m)$. Then
 \[\sum\limits_{i=1}^n a_i^2+b_i^2D \geq \frac{m^2}{t} +tD. \]
\end{lemma}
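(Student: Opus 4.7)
The plan is to isolate the role of the sums $A := \sum a_i^2$ and $B := \sum b_i^2$ and reduce the problem to a one-variable convexity inequality. Since $\sum a_i b_i = m > 0$ forces at least one $b_i$ to be positive, we know $B$ is a positive integer. By the Cauchy--Schwarz inequality applied to the vectors $(a_i)$ and $(b_i)$,
\begin{equation*}
AB \;\geq\; \Bigl(\sum_{i=1}^n a_i b_i\Bigr)^{2} \;=\; m^{2},
\end{equation*}
so $A \geq m^2/B$. Therefore
\begin{equation*}
\sum_{i=1}^n a_i^2 + D b_i^2 \;=\; A + DB \;\geq\; \frac{m^{2}}{B} + DB,
\end{equation*}
and it suffices to show $\frac{m^{2}}{B} + DB \geq \frac{m^{2}}{t} + tD$ for every positive integer $B$.

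After elementary manipulation, the desired inequality is equivalent to
\begin{equation*}
(t-B)\Bigl(\frac{m^{2}}{Bt} - D\Bigr) \;\geq\; 0,
\end{equation*}
so the two factors must have the same sign. I would handle this in three cases according to whether $B$ is below, equal to, or above $t$. The case $B=t$ is trivial (both factors vanish). If $1 \leq B \leq t-1$, then $t-B > 0$ and I need $D \leq m^{2}/(Bt)$; but $m^{2}/(Bt) \geq m^{2}/((t-1)t)$, and the hypothesis $D \in I_t(m)$ gives exactly $D \leq m^{2}/((t-1)t)$ (when $t \geq 2$; when $t=1$ this subcase is vacuous). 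If $B \geq t+1$, then $t-B<0$ and I need $D \geq m^{2}/(Bt)$; here $m^{2}/(Bt) \leq m^{2}/((t+1)t)$, and the lower endpoint of $I_t(m)$ is precisely $m^{2}/(t(t+1))$ (and for $t=1$ the lower bound $m^{2}/2$ likewise matches the worst case $B=2$).

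The only mild technical point, which I would watch closely, is that the integrality of $B$ is essential: if $B$ were allowed to be a real number in $(t-1,t+1)$, neither branch of the case analysis would give the required bound from the interval hypothesis on $D$. Apart from that, the argument is a clean combination of Cauchy--Schwarz with a single convex-function inequality, and no additional structure of $(a_i,b_i)$ beyond non-negativity and the product-sum constraint is used.
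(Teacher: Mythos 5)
Your proof is correct and follows essentially the same route as the paper: apply Cauchy--Schwarz to reduce to minimizing $\frac{m^2}{B}+DB$ over positive integers $B$, then verify that the hypothesis $D\in I_t(m)$ forces the two factors of the equivalent product inequality to share a sign. The only difference is cosmetic --- you spell out the three cases $B<t$, $B=t$, $B>t$ explicitly, while the paper compresses them into one line.
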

\begin{proof}
Let $k = \sum b_i^2$. By Cauchy-Schwarz inequality 
\[\sum a_i^2 \geq \frac{\left(\sum a_ib_i\right)^2}{\sum b_i^2} = \frac{m^2}{k}.\]

Now $\sum a_i^2+b_i^2D \geq \frac{m^2}{k} +kD$, so it is enough to prove $\frac{m^2}{k}+kD\geq \frac{m^2}{t}+tD$, which is equivalent to $(m^2-tkD)(t-k)\geq 0$. If $t=k$, this is true. Otherwise since $D\in I_t$, i.e. $t(t-1)D\leq m^2\leq t(t+1)D$, both factors $m^2-tkD$ and $t-k$ always have the same sign. Either $t\geq k+1$, and thus $m^2 \geq t(t-1)D \geq tkD$, or $t\leq k-1$ and $m^2 \leq t(t+1)D \leq tkD$. In both cases $(m^2-tkD)(t-k)\geq 0$.
\end{proof}
In other words, the function $\frac{m^2}{x}+xD$ attains its minimum over positive integers at $x = t$.

We will also need a version of the previous lemma with an extra condition on the parity of $a_i$, $b_i$. Then the inequalities can be refined in the following way:

\begin{lemma} \label{lemma:parity}
Let $a_i \equiv b_i \pmod{2}$, $1\leq i\leq n$ be non-negative integers satisfying  $\sum_{i=1}^n a_ib_i = m$ for a fixed positive integer $m$. Let $D$ be a real number and $t\equiv m \pmod{2}$ a positive integer such that $D \in J_t = J_t(m)$. Then
 \[\sum\limits_{i=1}^n a_i^2+b_i^2D \geq \frac{m^2}{t} +tD. \]
\end{lemma}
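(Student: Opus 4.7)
The approach is to mirror the proof of Lemma \ref{lemma:basic}, but exploit the extra parity hypothesis to widen the allowable range of $D$. As before, set $k = \sum b_i^2$ and apply Cauchy--Schwarz to obtain $\sum a_i^2 \geq m^2/k$, so that
\[ \sum_{i=1}^n a_i^2 + Db_i^2 \;\geq\; \frac{m^2}{k} + kD. \]
It then suffices to show that the function $f(x) = m^2/x + xD$ satisfies $f(k) \geq f(t)$ for every positive integer $k$ that is eligible under our assumptions.

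The key new observation is a parity constraint on $k$. Since $a_i \equiv b_i \pmod 2$, we have $a_i b_i \equiv b_i^2 \pmod 2$ for each $i$, hence
\[ k \;=\; \sum b_i^2 \;\equiv\; \sum a_i b_i \;=\; m \;\equiv\; t \pmod 2. \]
So we only need $f(k)\geq f(t)$ over positive integers $k$ with $k\equiv t\pmod 2$. A direct computation gives
\[ f(k)-f(t) \;=\; \frac{(t-k)(m^2 - ktD)}{kt}, \]
so the inequality is equivalent to $(t-k)(m^2-ktD)\geq 0$. The crucial improvement is that the parity match forces $|k-t|\geq 2$ whenever $k\neq t$, so we only ever need to control the neighbours $k=t-2$ and $k=t+2$ of $t$ rather than $k=t-1$ and $k=t+1$.

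Split into cases. If $k\geq t+2$, then $t-k<0$, and we need $m^2-ktD\leq 0$; the worst case is $k=t+2$, which gives exactly $D\geq m^2/(t(t+2))$, the lower endpoint of $J_t$. If $k\leq t-2$ (only possible when $t>2$), then $t-k>0$ and we need $m^2-ktD\geq 0$; the worst case is $k=t-2$, giving $D\leq m^2/((t-2)t)$, the upper endpoint of $J_t$. For $t\in\{1,2\}$ the parity constraint leaves no positive integer $k<t$, so only the lower bound on $D$ appears, matching the definition of $J_t$ in that range.

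The main obstacle is really just bookkeeping: one has to verify carefully that the neighbour $k = t \pm 2$ is indeed the binding constraint (i.e.\ that any farther admissible $k$ is automatically fine), and that the degenerate cases $t \in \{1,2\}$ are handled consistently with the one-sided intervals $J_1, J_2$. Both are straightforward monotonicity checks on $f$, so no substantial new idea beyond Lemma \ref{lemma:basic} is required.
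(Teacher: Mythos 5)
Your proposal is correct and follows essentially the same route as the paper: reduce via Cauchy--Schwarz to minimizing $f(x)=m^2/x+xD$ over positive integers $k=\sum b_i^2$, observe that the parity hypothesis forces $k\equiv m\equiv t\pmod 2$, and check that the endpoints of $J_t$ are exactly what is needed to handle the nearest admissible competitors $k=t\pm 2$. The only (harmless) difference is in the final verification: the paper invokes the location of the unconstrained minimum from Lemma~\ref{lemma:basic} and compares $f(t-1)$ with $f(t+1)$, whereas you redo the direct factorization $(t-k)(m^2-ktD)\geq 0$ with $\lvert t-k\rvert\geq 2$, which is equally valid and arguably more self-contained.
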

\begin{proof}
Let $k = \sum b_i^2$. The additional condition implies $m = \sum a_ib_i \equiv \sum b_i^2 = k \pmod{2}$. In the same fashion as in the proof of the previous lemma, it is sufficient to look at the minimum of the function $f(x) = \frac{m^2}{x}+xD$ over positive integers congruent to $m$ modulo $2$. Denote $t_0$ the positive integer such that $D \in I_{t_0}$. As was seen in the proof of the previous lemma, the minimum of $f(x)$ over all positive integers is equal to $f(t_0)$. If $t_0\equiv m \pmod{2}$, we have $D \in I_{t_0} \subsetneq J_{t_0}$. Since the intersection of the interiors of any two intervals $J_k$, $J_l$, such that $k\equiv l\pmod{2}$, is empty, we have $t=t_0$ and we are done. In the case $t_0\not \equiv m \pmod{2}$, the minimum of this function over positive integers congruent to $m$ modulo $2$ must be either $f(t_0-1)$ or $f(t_0+1)$ since this function has only one local minimum over positive reals. Since $D \in I_{t_0} \subsetneq J_{t_0-1} \cup J_{t_0+1}$, $t$ is equal to either $t_0-1$, or $t_0+1$. It can be easily seen that $f(t_0-1)\leq f(t_0+1)$ if and only if $D\geq \frac{m^2}{(t_0-1)(t_0+1)}$, i.e. $D \in J_{t_0+1}$. This concludes the proof.
\end{proof}

Using the Lemma \ref{lemma:basic} for $t=1$, if $D \geq \frac{m^2}{2}$, then $\sum a_i^2 +b_i^2D \geq m^2+D$. If we add the parity condition, Lemma \ref{lemma:parity} implies the minimum is $m^2+D$ for $D \geq \frac{m^2}{3}$ and odd $m$, resp.  $\frac{m^2}{2}+2D$ for $D \geq \frac{m^2}{8}$ and even $m$. Even though we won't use these special cases explicitly, they will give rise to the general necessary bounds for $D$ in terms of $m$.

Both of these lemmata can be utilized to greatly restrict possible values of $D$ for which all elements of $m\kO^+$ are represented as the sum of squares in $\kO$. For fixed $k$, one could simply consider the smallest totally positive element $\alpha$ of the form $a+k\sqrt{D}$ and look when $m\alpha$ can be represented as the sum of squares. Combining these results for all possible $k$ and $I_t$, $J_t$ gives rise to the following proposition.

\begin{proposition} \label{proposition:main}
Let $K=\Q(\sqrt{D})$ with $D\geq 2$ squarefree and a positive integer $m$. 
\begin{enumerate}
    \item If $D\equiv 2,3 \pmod{4}$, $t, k \in \Z_{>0}$, $D\geq m$ and 
    \[\sqrt{D} \in
     \left [\frac{mk}{2t}+\frac{\sqrt{m}}{\sqrt{t}}, \frac{mk}{2(t-1)}-\frac{\sqrt{m}}{\sqrt{t-1}}\right]
    ,\] 
    then \emph{not} all elements of $m\kO^{+}$ are represented as the sum of squares in $\kO$. This interval is non-empty only for 
    \[m\geq \frac{4t\left(t-1\right)\left(2t-1+2\sqrt{t(t-1)}\right)}{k^2}.\]
    \item If $D\equiv 1 \pmod{4}$, $t, k \in \Z_{>0}$, $t\equiv mk \pmod{2}$, $D\geq 4m$ and 
    \[\sqrt{D} \in
     \left
    [\frac{mk}{t}+\frac{2\sqrt{m}}{\sqrt{t}}, \frac{mk}{t-2}-\frac{2\sqrt{m}}{\sqrt{t-2}}\right]
    ,\]
    then \emph{not} all elements of $m\kO^{+}$ are represented as the sum of squares in $\kO$. This interval is non-empty only for 
    \[m\geq \frac{t\left(t-2\right)\left(2t-2+2\sqrt{t(t-2)}\right)}{k^2}.\]
\end{enumerate}
For $t=1$ if $D\equiv 2,3 \pmod{4}$, resp. $t\in \{1,2\}$ if $D\equiv 1 \pmod{4}$, we define the right bound of the intervals to be $\infty$ and they are therefore always non-empty.
\end{proposition}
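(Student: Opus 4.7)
The strategy follows the heuristic sketched in the paragraph preceding Lemma~\ref{lemma:basic}: for each positive integer $k$ I produce a concrete candidate $\alpha \in \kO^+$ and show that if $\sqrt{D}$ lies in the claimed interval, then $m\alpha$ cannot be represented as a sum of squares. Concretely, fix $k$ and take the minimal totally positive element whose $\sqrt{D}$-coefficient is $k$: in case~(1), $\alpha = a + k\sqrt{D}$ with $a = 1 + \floor{k\sqrt{D}}$; in case~(2), $\alpha = (a + k\sqrt{D})/2$ with the smallest $a \equiv k \pmod{2}$ satisfying $a > k\sqrt{D}$, so in particular $a \leq k\sqrt{D} + 2$. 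Suppose for contradiction that $m\alpha = \sum_i \beta_i^2$ with $\beta_i \in \kO$. Expanding the squares and matching the rational and $\sqrt{D}$-parts, one obtains
\[
\sum(a_i^2+Db_i^2) = ma,\quad \sum a_ib_i = mk/2 \qquad \text{(case 1)},
\]
and in case~(2) (where $\beta_i = (a_i + b_i\sqrt{D})/2$ with $a_i \equiv b_i \pmod 2$),
\[
\sum(a_i^2+Db_i^2) = 2ma,\quad \sum a_ib_i = mk.
\]

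Next, I invoke Lemma~\ref{lemma:basic} in case~(1), with the value $mk/2$ playing the role of $m$ in the lemma (its Cauchy--Schwarz proof works equally well for signed $a_i$, so non-negativity is unnecessary), and Lemma~\ref{lemma:parity} in case~(2), with constraint value $mk$ and parity condition $t \equiv mk \pmod 2$. Provided $D$ lies in $I_t(mk/2)$ respectively $J_t(mk)$, these give
\[
\sum(a_i^2+Db_i^2) \ \geq\ \tfrac{m^2k^2}{4t}+tD,\qquad \text{resp.}\qquad \tfrac{m^2k^2}{t}+tD.
\]
Substituting the upper bounds $ma \leq m(k\sqrt{D}+1)$ (resp.\ $2ma \leq 2m(k\sqrt{D}+2)$) produces a quadratic inequality in $\sqrt{D}$ whose discriminant simplifies to $4mt$ (resp.\ $16mt$), with roots exactly $mk/(2t)\pm\sqrt{m/t}$ (resp.\ $mk/t\pm 2\sqrt{m/t}$). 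Hence a contradiction arises whenever $\sqrt{D}$ exceeds the larger root. Running the same argument with $t-1$ (resp.\ $t-2$) in place of $t$ rules out $\sqrt{D}$ below the smaller root for that parameter; combining the two bounds carves out precisely the stated interval.

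The non-emptiness condition then reduces to pure algebra: isolating $\sqrt{m}$ in $mk/(2t)+\sqrt{m/t}\leq mk/(2(t-1))-\sqrt{m/(t-1)}$, squaring, and simplifying yields exactly $mk^2\geq 4t(t-1)\bigl(2t-1+2\sqrt{t(t-1)}\bigr)$; the case~(2) computation is completely analogous.

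The delicate point is the interlocking of the two parameter choices: one must verify that the stated interval for $\sqrt{D}$ is contained in $\sqrt{I_t(mk/2)\cup I_{t-1}(mk/2)}$ (resp.\ $\sqrt{J_t(mk)\cup J_{t-2}(mk)}$), so that at every point of the interval at least one of the two lemma applications is legitimate. This containment, together with the validity of the bounds on $a$ used above, is exactly what the hypotheses $D\geq m$ and $D\geq 4m$ guarantee.
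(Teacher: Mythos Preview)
Your argument is correct and follows the same overall route as the paper's: the same test element $\alpha$, the same comparison of rational and irrational parts, the same application of Lemmas~\ref{lemma:basic} and~\ref{lemma:parity}, and the same combination of the parameter values $t$ and $t-1$ (resp.\ $t-2$). The one real difference concerns the role of the hypothesis $D\ge m$ (resp.\ $D\ge 4m$). In the paper this is used to force all $a_i,b_i$ in a putative representation $m\alpha=\sum\beta_i^2$ to be non-negative, via the estimate $(\beta_i')^2\ge 1+D+2\sqrt D> m>m\alpha'$ whenever $a_ib_i<0$; you instead observe---correctly---that the Cauchy--Schwarz step behind Lemma~\ref{lemma:basic} is valid for arbitrary real $a_i,b_i$, so that reduction is unnecessary. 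Your final paragraph then misattributes the hypothesis: the containment of the stated interval in $\sqrt{I_t\cup I_{t-1}}$ (resp.\ $\sqrt{J_t\cup J_{t-2}}$) is in fact automatic, since the left endpoint exceeds $mk/(2t)>mk/\bigl(2\sqrt{t(t+1)}\bigr)$ and the right endpoint lies below $mk/(2(t-1))<mk/\bigl(2\sqrt{(t-1)(t-2)}\bigr)$. In your version, therefore, the assumptions $D\ge m$ and $D\ge 4m$ turn out to be redundant.
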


\begin{proof}
\textbf{(a)} Let's start with the case $D\equiv 2,3 \pmod{4}$. 

For an arbitrary positive integer $k$, consider $\alpha = \floor*{k\sqrt{D}}+1 +k\sqrt{D}$ and suppose $m\alpha = \sum (\alpha_i)^2$ for some $\alpha_i = a_i+b_i\sqrt{D} \in \kO$. Observe that $\alpha' = \floor*{k\sqrt{D}}+1 -k\sqrt{D} <1$. Without loss of generality, we can choose $a_i\geq0$ for all $i$ and $b_i\geq 0$ if $a_i=0$. For the sake of contradiction, suppose there exists some $i$ such that $b_i < 0$ and $a_i>0$. We then have $m\alpha' \geq (\alpha_i')^2 = a_i^2+b_i^2D+2a_i(-b_i)\sqrt{D}\geq1+D+2\sqrt{D}\geq m$ for $D\geq m$, which is impossible because $\alpha'<1$. Therefore, we can assume $b_i\geq 0$ for all $i$.

If we compare the irrational parts in the expression of $m\alpha$, we get $mk = 2\sum a_ib_i$. The choice of $k=1$ immediately eliminates all odd $m$, so we can consider only even $m$. Comparing rational parts,  we can apply Lemma \ref{lemma:basic} for an arbitrary $t$ to get 
\[mk\sqrt{D}+m > m(\floor*{k\sqrt{D}}+1) = \sum a_i^2+b_i^2D \geq \frac{m^2k^2}{4t}+tD,\] 
for $D\in I_t\left(\frac{mk}{2}\right)$. This is impossible if $\sqrt{D} \geq \frac{mk}{2t}+\frac{\sqrt{m}}{\sqrt{t}}$ or if $\sqrt{D} \leq \frac{mk}{2t}-\frac{\sqrt{m}}{\sqrt{t}}$. Given that $D\in I_t\left(\frac{mk}{2}\right)$, this means that if \[ \sqrt{D} \in  \left [\frac{mk}{2\sqrt{t(t+1)}}, \frac{mk}{2t}-\frac{\sqrt{m}}{\sqrt{t}}\right]\cup \left [\frac{mk}{2t}+\frac{\sqrt{m}}{\sqrt{t}}, \frac{mk}{2\sqrt{(t-1)t}}\right], \]
$m\alpha$ can not be represented as the sum of squares.

Combining parts of these constraints for $t$ and $t-1$, we get the interval from the theorem statement: \[\sqrt{D} \in\left [\frac{mk}{2t}+\frac{\sqrt{m}}{\sqrt{t}}, \frac{mk}{2(t-1)}-\frac{\sqrt{m}}{\sqrt{t-1}}\right].\] For this interval to be non-empty, inequalities \[\frac{mk}{2t}+\frac{\sqrt{m}}{\sqrt{t}} \leq \frac{mk}{2\sqrt{(t-1)t}}\leq \frac{mk}{2(t-1)}-\frac{\sqrt{m}}{\sqrt{t-1}}\] 
must hold, which happens for $m\geq \frac{4t\left(t-1\right)\left(2t-1+2\sqrt{t(t-1)}\right)}{k^2}$.
The above interval is not well-defined for $t=1$, for which we immediately get the right bound to be $\infty$ just from considering a single inequality for $D\in I_1\left(\frac{mk}{2}\right)$.

\textbf{(b)} Now let's look at the case $D\equiv 1 \pmod{4}$.

This time $\alpha = \floor*{\frac{k\sqrt{D}-k}{2}}+1+\frac{k+k\sqrt{D}}{2}$ and again $m\alpha = \sum_i (\alpha_i)^2$. However, this time $\alpha_i = \frac{a_i}{2}+\frac{b_i\sqrt{D}}{2}$ with $a_i\equiv b_i\pmod{2}$. We can again assume $a_i\geq0$ and $b_i\geq 0$ since $\frac{1+D}{4}+\frac{\sqrt{D}}{2}>m$ for $D\geq 4m$. By comparing irrational parts, we get $\sum a_ib_i = mk$. Analogously, comparing rational parts and using Lemma \ref{lemma:parity}, for arbitrary $t\equiv m \pmod{2}$ we get
\[\frac{mk\sqrt{D}}{2}+m > m\left(\floor*{\frac{k\sqrt{D}-k}{2}}+1+\frac{k}{2}\right) = \sum \frac{a_i^2+b_i^2D}{4} \geq \frac{m^2k^2}{4t}+\frac{tD}{4}\]
for $D \in J_t(mk)$. The rest of the proof is identical, except this time we obtain intervals
\[\left[\frac{mk}{t}+\frac{2\sqrt{m}}{\sqrt{t}}, \frac{mk}{t-2}-\frac{2\sqrt{m}}{\sqrt{t-2}}\right],\]
which are non-empty for $m\geq \frac{t(t-2)(2t-2+2\sqrt{t(t-2)}}{k^2}$. Again, cases $t=1$ and $t=2$ have to be considered separately.
\end{proof}

\begin{figure}[h]
\includegraphics[width=12cm]{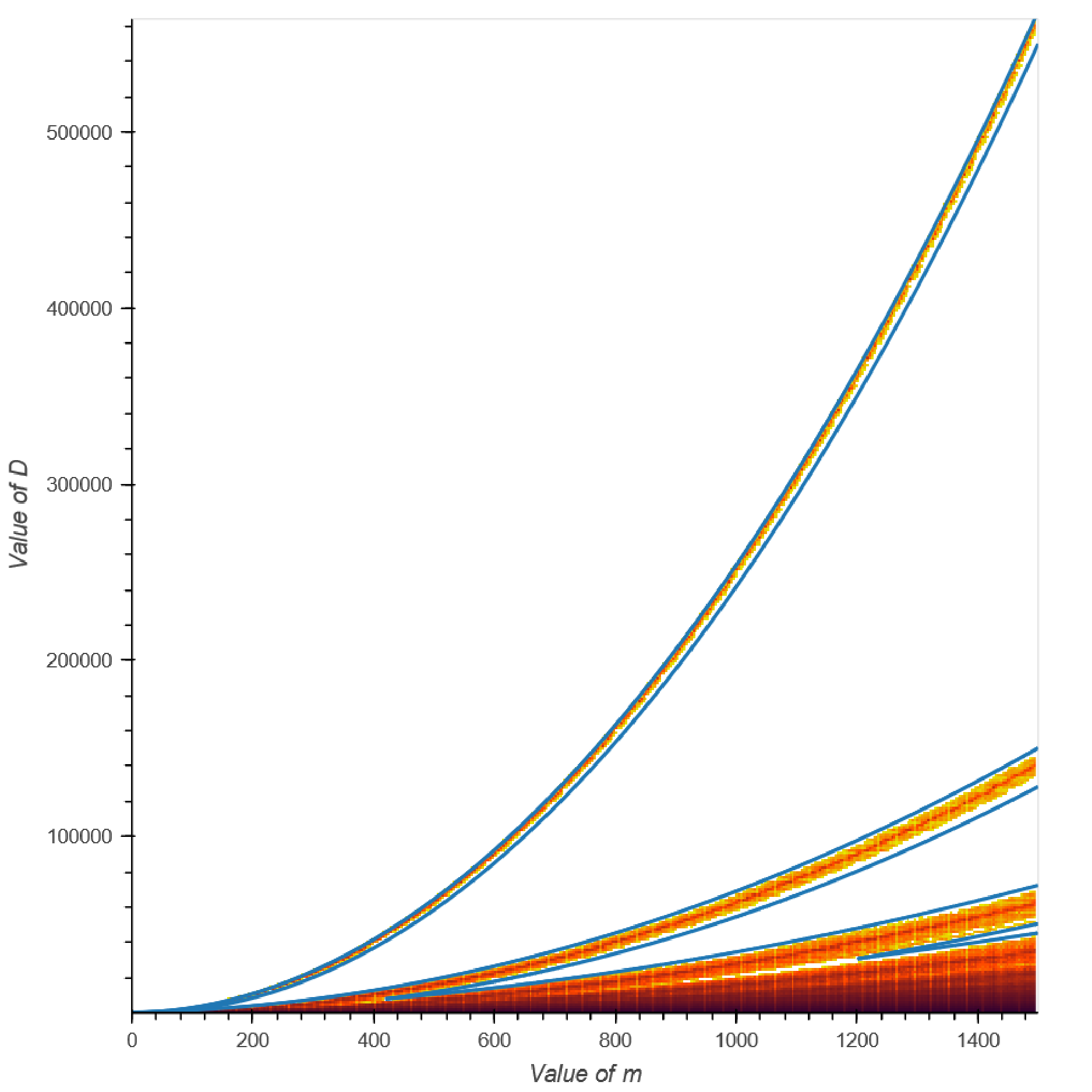}
\centering
\caption{Case $D\equiv 2,3 \pmod{4}$. Red dots represent pairs $(m,D)$ such that all elements of $m\kO^+$ are sums of squares.}
\label{fig:examplegraph}
\end{figure}

The proof of Proposition \ref{proposition:main} has a similar structure as the one used by \cite{sosil} to prove that if $D>4m^2$ for $D\equiv 2,3 \pmod{4}$, resp. $D>16m^2$ for $D\equiv 1 \pmod{4}$, then not all elements of $m\kO^+$ are sums of squares. Proposition \ref{proposition:main} shows that the boundary is actually around $\frac{m^2}{4}$, resp $m^2$ (simply consider the proposition for $k=1$ and $t$ the lowest possible).

The benefit of Proposition \ref{proposition:main} is that not only it gives better bounds, it also gives quite good insight into the structure of the problem and dependence of $D$ on $m$. For example, if $D\equiv 2,3 \pmod{4}$, these solutions can be only ``clustered'' around values $m^2/4$, $m^2/16$, $\ldots$, $m^2/4i^2$ and with increasing $m$, we get more accurate approximations for how large these clusters can be. The basic restrictions can be seen just by simply considering $k=1$. The interesting question is what is the intersection of all these intervals for given $m$. As we will show in the next theorem, for parameters $k$, $t$ with ratio $k:t$ close to a fixed value, these intervals can be typically grouped together to create one large interval between $m/2i$ and $m/2(i-1)$. This behavior can be seen in Figure \ref{fig:examplegraph}, containing computed data as well as these summarized bounds obtained by Theorem \ref{theorem:main}. 

\begin{customthm}{2}
Let $K=\Q(\sqrt{D})$ with $D\geq 2$ squarefree and a positive integer $m$. If $\sqrt{D}$ lies in one of the following intervals, then \emph{not} all elements of $m\kO^+$ can be represented as the sum of squares:
\begin{enumerate}
\item $\left [ \frac{m}{2}+4 ,\infty\right)$,
$ \left[\frac{m}{2i}+iC_1,  \frac{m}{2(i-1)}-(i-1)C_2 \right ]$ for integer $i>1$ and $D \equiv 2,3 \pmod{4}$,
\item $\left [ \frac{m}{2}+8 ,\infty\right)$,
$ \left[ \frac{m}{2i}+2iC_1,  \frac{m}{2(i-1)}-2(i-1)C_2 \right ]$ for integer $i>1$, $D \equiv 1 \pmod{4}$ and even $m$,
\item $\left [ m+4 ,\infty\right)$, $ \left[ \frac{m}{2i+1}+(4i+2)C_1,  \frac{m}{2i-1}-(4i-2)C_2 \right ]$ for integer $i>0$, $D \equiv 1 \pmod{4}$ and odd $m$,
\end{enumerate}
where constants $C_1$, $C_2$ are defined as $C_1 = \sqrt{24+16\sqrt{2}}$ and $C_2 = \sqrt{48+24\sqrt{3}}$.
\end{customthm}
\begin{proof}
We will prove only the case (a), where $D \equiv 2,3 \pmod{4}$. The other cases can be handled analogously.

By AM-GM inequality $\frac{m}{2i}+iC_1 \geq \sqrt{2mC_1}\geq \sqrt{m}$, hence we can assume $\sqrt{D} \geq \sqrt{m}$ and it does not restrict the desired intervals. By Proposition \ref{proposition:main}, we have the intervals
\[S(t,k) = \left [\frac{mk}{2t}+\frac{\sqrt{m}}{\sqrt{t}}, \frac{mk}{2(t-1)}-\frac{\sqrt{m}}{\sqrt{t-1}}\right]\]
 such that if $\sqrt{D}$ lies in one of them, then \emph{not} all elements of $m\kO^+$ can be represented as the sum of squares.
For a fixed $i$, we want to look at the union of all these intervals $S(t,k)$ intersected with the interval $\left [\frac{m}{2i}, \frac{m}{2(i-1)} \right]$. 

At first, let's consider the case $i=1$ where we claim $\left [ \frac{m}{2} ,\infty\right) \cap \bigcup_{t,k\geq 1} S(t,k) \supset \left [ \frac{m}{2}+4 ,\infty\right)$. In fact, we will show that it is sufficient to consider only the union $\bigcup_{t\geq 1} S(t,t)$. The lower bound of the intervals $S(t,t)$ gets lower with increasing $t$, so if we want their union to be an interval, we need to check that the upper bound of $S(t+1,t+1)$ is larger than the lower bound of $S(t,t)$. This is equivalent to the inequality \[\frac{mt}{2t}+\frac{\sqrt{m}}{\sqrt{t}} \leq \frac{m(t+1)}{2t}-\frac{\sqrt{m}}{\sqrt{t}},\]
which is by a trivial computation true for $m\geq 16t$. Since we consider a fixed value of $m$ and $t$ is integer we get a bound $t\leq \floor{\frac{m}{16}}$ for which we can unite the intervals $S(t,t)$ to get an interval.  When $m\geq 16t$ holds, the intervals $S(t,t)$ are also all non-empty (the bounds from Proposition \ref{proposition:main} are satisfied). So in total
\[ \bigcup_{t=1}^{\floor{\frac{m}{16}}+1} S(t,t) = \left [ \frac{m}{2}+\sqrt{\frac{m}{\floor{\frac{m}{16}}+1}} ,\infty\right) \supset \left [ \frac{m}{2}+4 ,\infty\right). \]

In the case $i>1$, we will show that  \[ \bigcup_{k\geq 1} S(ki,k) \cup  \bigcup_{k\geq 1} S(k(i-1)+1,k) \supset \left[ \frac{m}{2i}+iC_1,   \frac{m}{2(i-1)}-(i-1)C_2  \right ], \] which will prove the theorem. At first, observe that for $k=1$, it holds $S(k(i-1)+1,k) = S(ki,k) = S(i,1)$. Now, let's look at $\bigcup_{k\geq 1} S(ki,k)$, where $S(ki,k) = \left[ \frac{m}{2i}+\frac{\sqrt{m}}{\sqrt{ki}},   \frac{mk}{2(ki-1)}-\frac{\sqrt{m}}{\sqrt{ki-1}}  \right ]$. Clearly with increasing $k$, the lower bound of $S(ki,k)$ decreases. Hence if we prove that $S(ki,k) \cap S((k+1)i,k+1) \neq \emptyset$ for all $1 \leq k < K$, where $K$ is some positive integer, then $\bigcup_{k\geq 1} S(ki,k) \supset \left[ \frac{m}{2i}+\frac{\sqrt{m}}{\sqrt{Ki}},   \frac{m}{2(i-1)}-\frac{\sqrt{m}}{\sqrt{i-1}}  \right ]$. With this knowledge, the condition $S(ki,k) \cap S((k+1)i,k+1) \neq \emptyset$ can be rewritten as

\[ \frac{m}{2i}+\frac{\sqrt{m}}{\sqrt{ki}} \leq \frac{m(k+1)}{2((k+1)i-1)}-\frac{\sqrt{m}}{\sqrt{(k+1)i-1}}, \]

which is by a straightforward computation equivalent to 

\begin{equation}
\label{prf-bound1}
   m \geq 4i((k+1)i-1) \frac{2ki+i-1+2\sqrt{ki}\sqrt{(k+1)i-1}}{k} . 
\end{equation}

Just to verify, if (\ref{prf-bound1}) is satisfied then the inequality

\[ m\geq \frac{4(k+1)i\left((k+1)i-1\right)\left(2(k+1)i-1+2\sqrt{(k+1)i((k+1)i-1)}\right)}{(k+1)^2} \]

from Proposition \ref{proposition:main} for $S((k+1)i,k+1)$ to be non-empty is satisfied as well.

Denote $K>0$ the smallest positive integer such that (\ref{prf-bound1}) is not satisfied for $k=K$. Then
\[ m < 4i((K+1)i-1) \frac{2Ki+i-1+2\sqrt{Ki}\sqrt{(K+1)i-1}}{K} < 4i^3K\left(1+\frac{1}{K}\right)\left(2+\frac{1}{K}+2\sqrt{1+\frac{1}{K}}\right),\]
which using $K\geq 1$ yields $m < i^3K(24+16\sqrt{2})$.
Finally, by using this inequality, $\frac{m}{2i}+\frac{\sqrt{m}}{\sqrt{Ki}} < \frac{m}{2i}+i\sqrt{24+16\sqrt{2}}$, therefore \[ \bigcup_{k\geq 1} S(ki,k) \supset \left[ \frac{m}{2i}+\frac{\sqrt{m}}{\sqrt{Ki}},   \frac{m}{2(i-1)}-\frac{\sqrt{m}}{\sqrt{i-1}}  \right ] \supset \left[ \frac{m}{2i}+i\sqrt{24+16\sqrt{2}},   \frac{m}{2(i-1)}-\frac{\sqrt{m}}{\sqrt{i-1}}  \right ]. \]

Now let's look at the union $\bigcup_{k\geq 1} S(k(i-1)+1,k)$, where \[S(k(i-1)+1,k) = \left[ \frac{mk}{2(k(i-1)+1)}+\frac{\sqrt{m}}{\sqrt{(k(i-1)+1)}},   \frac{m}{2(i-1)}-\frac{\sqrt{m}}{\sqrt{k(i-1)}}  \right ].\] This time clearly the upper bounds are increasing with increasing $k$, so analogously the condition $S(k(i-1)+1,k) \cap S((k+1)(i-1)+1,k+1) \neq \emptyset$ translates to the inequality

\[ \frac{m}{2(i-1)}-\frac{\sqrt{m}}{\sqrt{k(i-1)}} \geq \frac{m(k+1)}{2((k+1)(i-1)+1)}+\frac{\sqrt{m}}{\sqrt{((k+1)(i-1)+1)}}, \]

which is by a straightforward computation equivalent to 

\begin{equation}
\label{prf-bound2}
   m \geq 4(i-1)((k+1)(i-1)+1) \frac{2k(i-1)+i+2\sqrt{k(i-1)}\sqrt{(k+1)(i-1)+1}}{k} . 
\end{equation}
Again, if (\ref{prf-bound2}) is satisfied then the inequalities from Proposition \ref{proposition:main} for $S((k+1)(i-1)+1,k+1)$ and $S(k(i-1)+1,k)$  to be non-empty are satisfied as well.

Analogously, denote $K>0$ the smallest positive integer such that (\ref{prf-bound2}) is not satisfied for $k=K$. Then
\[ m < 4(i-1)^3K\left(1+\frac{1}{K}+\frac{1}{K(i-1)}\right)\left(2+\frac{i}{(i-1)K}+2\sqrt{1+\frac{1}{K}+\frac{1}{K(i-1)}}\right),\]
which using $K\geq 1$ and $i \geq 2$ yields $m < (i-1)^3K(48+24\sqrt{3})$.
Finally, by using this inequality, $\frac{m}{2(i-1)}-\frac{\sqrt{m}}{\sqrt{K(i-1)}} > \frac{m}{2(i-1)}-(i-1)\sqrt{48+24\sqrt{3}}$, therefore \[ \bigcup_{k\geq 1} S(k(i-1)+1,k) \supset \left[ \frac{m}{2i}+\frac{\sqrt{m}}{\sqrt{i}},   \frac{m}{2(i-1)}-(i-1)\sqrt{48+24\sqrt{3}}  \right ]. \]

\end{proof}

Again, it is important to note that for a fixed $m$, only finitely many of these intervals are non-empty.
Constants $C_1$ and $C_2$ are chosen so that the theorem can be stated for all $m$ simultaneously. As can be seen in the proof, for $m \rightarrow \infty$ the variable $K \rightarrow \infty$ and the optimal constants $C_1,C_2 \rightarrow 4$.

To conclude this section, let's summarize all the known necessary and sufficient bounds for $D$ in terms of $m$.

\begin{cor}\label{cor:bounds}
Let $K = \Q(\sqrt{D})$ with $D\geq 2$ squarefree.
\begin{enumerate}
    \item If $D\equiv 2,3 \pmod{4}$ and $D\geq \left(\frac{m}{2}+4\right)^2$, then \emph{not} all elements of $m\kO^+$ are represented as the sum of squares in $\kO$.
    \item If $D\equiv 1 \pmod{4}$, $m$ is even and $D\geq \left(\frac{m}{2}+8\right)^2$, then \emph{not} all elements of $m\kO^+$ are represented as the sum of squares in $\kO$.
    \item If $D\equiv 1 \pmod{4}$, $m$ is odd and $D\geq \left(m+4\right)^2$, then \emph{not} all elements of $m\kO^+$ are represented as the sum of squares in $\kO$.
    \item If $D\leq m$ for $D\equiv 2,3 \pmod{4}$ and even $m$ or if $D\leq 2m$ for $D\equiv 1 \pmod{4}$, all elements of $m\kO^+$ are represented as the sum of squares in $\kO$.
    \item If $m$ is odd and $D\equiv 2,3 \pmod{4}$, then \emph{not} all elements of $m\kO^+$ are represented as the sum of squares in $\kO$.
\end{enumerate}
\end{cor}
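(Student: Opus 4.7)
The plan is straightforward: this corollary is a compilation, so each of the five items should be dispatched by matching it to the appropriate already-proved statement.

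For items (1), (2), (3), I would apply Theorem~\ref{theorem:main} directly. Each of its three cases lists, as the first choice, an unbounded half-line of the form $[\frac{m}{2}+4,\infty)$, $[\frac{m}{2}+8,\infty)$, or $[m+4,\infty)$, matched to the residue class of $D$ mod $4$ and the parity of $m$. The hypotheses $D\geq(\frac{m}{2}+4)^2$, $D\geq(\frac{m}{2}+8)^2$, $D\geq(m+4)^2$ are simply the condition that $\sqrt{D}$ lies in the corresponding half-line, so Theorem~\ref{theorem:main} applies verbatim and produces an element of $m\kO^+$ which is not a sum of squares.

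For item (4), I would invoke Theorem~\ref{cor:2}(b). The only point that needs care is the factor $\kappa$ appearing in the conclusion "all elements of $\kappa m\kO^+$ are sums of five squares". In the case $D\equiv 1\pmod{4}$ we have $\kappa=1$, so $\kappa m\kO^+=m\kO^+$ already, and $m\geq D/2$ is literally the hypothesis $D\leq 2m$. In the case $D\equiv 2,3\pmod{4}$ we have $\kappa=2$; writing the even $m$ as $m=2m'$, Theorem~\ref{cor:2}(b) applied with parameter $m'$ gives that every element of $\kappa m'\kO^+=2m'\kO^+=m\kO^+$ is a sum of five squares, provided $m'\geq D/2$, i.e.\ $D\leq 2m'=m$, which is exactly the stated hypothesis.

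Item (5) is literally Theorem~\ref{cor:2}(c) and requires no further argument. Because the whole proof is essentially citation, there is no serious obstacle; the only mildly delicate point is the translation between $m$ and $\kappa m$ in item (4) for the $D\equiv 2,3\pmod 4$, even-$m$ branch, which is the one place where one must not forget to absorb the factor of $2$ before appealing to the existing theorem.
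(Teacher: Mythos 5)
Your proposal is correct and follows the same route as the paper: items (1)--(3) are read off from the half-lines in Theorem~\ref{theorem:main}, and items (4)--(5) are cited from Theorem~\ref{cor:2}. Your explicit handling of the factor $\kappa$ in item (4) (substituting $m=2m'$ in the $D\equiv 2,3\pmod 4$ branch) is a detail the paper leaves implicit, and it is done correctly.
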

\begin{proof}
Parts $(a)-(c)$ are direct consequences of Theorem \ref{theorem:main}.
Parts $(d), (e)$ has already been proven by \cite{sosil} in Theorem \ref{cor:2}. Proof of the part $(d)$ will be outlined in the next section, whilst
the part $(e)$ is a consequence of the comparison of irrational parts in the sum of squares, which can be seen in the proof of Proposition \ref{proposition:main}.
\end{proof}

In Section \ref{sec:indecom}, we will look closely at some specific values of $D$ to show that these restrictions are in some sense accurate.

\section{Peters theorem} \label{sec:peters}

In this section, we will try to look at the other side of the problem, which is showing that for chosen $D$ and $m$ all elements of $m\kO^+$ are represented as the sum of squares. The key element of the following work will be a result proved by Peters \cite{P}, which characterizes when an element is the sum of squares.

\begin{theorem}{\textup{\cite[Satz 2]{P}}} \label{theorem:peters}
Let $K = \Q(\sqrt{D})$ with $D>0$ squarefree. Then $\alpha \in \kO^+$ of the form 
\[
\alpha=
\left\{
	\begin{array}{ll}
		a+b\frac{1+\sqrt{D}}{2} & \mbox{if } D\equiv 1 \pmod{4}, \\
		  a+2b\sqrt{D} &\mbox{if } D\equiv 2,3 \pmod{4},
	\end{array}
\right. \]
is the sum of $5$ squares if and only if there exist rational integer $c$, with additional condition $c\equiv b \pmod{2}$ if $D\equiv 1 \pmod{4}$, such that 
\[
c\in
\left\{
	\begin{array}{ll}
		\left[\frac{2a+b-2\sqrt{N(\alpha)}}{D},\frac{2a+b+2\sqrt{N(\alpha)}}{D} \right]& \mbox{if } D\equiv 1 \pmod{4}, \vspace{0.5em}\\
		\left[ \frac{a-\sqrt{N(\alpha)}}{2D},\frac{a+\sqrt{N(\alpha)}}{2D} \right] &\mbox{if } D\equiv 2,3 \pmod{4}.
	\end{array}
\right.
\]
\end{theorem}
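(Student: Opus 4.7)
The plan is to reduce the question to a classical result about representing positive semidefinite integral binary quadratic forms as sums of squares of integer linear forms (Mordell, 1930). First I would expand $\alpha=\sum_{i=1}^{5}\alpha_i^2$ in the integral basis of $\kO$. In the case $D\equiv 2,3\pmod 4$ write $\alpha_i=x_i+y_i\sqrt D$; in the case $D\equiv 1\pmod 4$ write $\alpha_i=(u_i+v_i\sqrt D)/2$ with $u_i\equiv v_i\pmod 2$ (forced by $\alpha_i\in\kO$). Collecting rational and irrational parts gives a pair of linear relations: in the first case $\sum x_i^2+D\sum y_i^2=a$ and $\sum x_iy_i=b$; in the second case $\sum u_i^2+D\sum v_i^2=4a+2b$ and $\sum u_iv_i=b$. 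Setting $X=\sum x_i^2$, $Y=\sum y_i^2$, $Z=\sum x_iy_i$ (and analogously in the second case), the triple $(X,Y,Z)$ is the Gram matrix of the vectors $(x_i)$ and $(y_i)$, so by the Cauchy--Schwarz inequality $XY-Z^2\geq 0$.

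Next I would substitute the linear relation into this inequality to produce a quadratic inequality in $c:=Y$. A short computation of $\alpha\alpha'$ gives the identities $a^2-4Db^2=N(\alpha)$ and $(2a+b)^2-Db^2=4N(\alpha)$ in the two cases respectively, and the quadratic inequality in $c$ then factors into exactly the interval stated in the theorem. This already yields the forward direction, and it pins down the parity condition: in the $D\equiv 1\pmod 4$ case, the requirement $u_i\equiv v_i\pmod 2$ forces $b=\sum u_iv_i\equiv\sum v_i^2=c\pmod 2$.

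The nontrivial direction requires a converse: given non-negative integers $X,Y$ and an integer $Z$ with $XY\geq Z^2$, one must actually produce integers $x_1,\dots,x_5,y_1,\dots,y_5$ realizing these sums. This is precisely Mordell's theorem, which asserts that every positive semidefinite integral binary quadratic form $Xu^2+2Zuv+Yv^2$ is a sum of five squares of integral linear forms; its proof reduces $(X,Y,Z)$ via $\mathrm{SL}_2(\Z)$ to the range $|2Z|\leq X\leq Y$ and then induces on $X+Y$. This is where essentially all the work lies, and it is the one genuine obstacle to the whole proof. Granting it, one recovers $\alpha=\sum\alpha_i^2$ directly from the realized Gram data. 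In the $D\equiv 1\pmod 4$ case one must additionally arrange $u_i\equiv v_i\pmod 2$; this is a mild refinement of Mordell's argument, using the hypothesis $c\equiv b\pmod 2$ together with, if necessary, orthogonal substitutions preserving $(X,Y,Z)$ to fix the parities of individual entries. Once Mordell's theorem is available, everything else in the proof is direct algebraic bookkeeping.
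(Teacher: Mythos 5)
The paper offers no proof of this statement at all --- it is imported verbatim from Peters \cite[Satz 2]{P} --- so there is no internal argument to compare against; judged on its own merits, your outline is essentially the classical (Peters/Mordell) proof and is sound. The forward direction via Gram data and Cauchy--Schwarz is correct as written, including the identities $a^2-4Db^2=N(\alpha)$ and $(2a+b)^2-Db^2=4N(\alpha)$ that turn the discriminant condition into the stated intervals, and the converse correctly isolates Mordell's five-square theorem for positive semidefinite integral binary forms as the real content. The one place your sketch is too casual is the parity refinement for $D\equiv 1\pmod 4$: appealing to unspecified orthogonal substitutions that fix the parities of individual entries is not an argument, since $X\equiv Y\equiv Z\pmod 2$ alone does not obviously let you repair a given Mordell representation entrywise. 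The clean fix is structural rather than corrective: the condition $u_i\equiv v_i\pmod 2$ says exactly that each vector $(u_i,v_i)$ lies in the index-$2$ sublattice of $\Z^2$ spanned by $(1,1)$ and $(1,-1)$, so writing $u_i=p_i+q_i$, $v_i=p_i-q_i$ converts the problem into representing the transformed Gram triple $P=(X+Y+2Z)/4$, $Q=(X+Y-2Z)/4$, $R=(X-Y)/4$ by five integer vectors $(p_i,q_i)$; one checks that $c\equiv b\pmod 2$ together with $D\equiv 1\pmod 4$ forces $X\equiv Y\pmod 4$ and $X+Y+2Z\equiv 0\pmod 4$, that $P,Q\geq 0$, and that $PQ-R^2=(XY-Z^2)/4\geq 0$, so Mordell's theorem applies directly to $(P,R,Q)$ and the parities come out automatically. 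With that substitution in place of your vaguer step, the proposal is complete modulo the correctly attributed classical input.
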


One direct consequence of this theorem is Corollary $\ref{cor:bounds}$ (d). The norm of every element in $m\kO^+$ is at least $m^2$; therefore, if $m$ is large enough, the interval has the length of at least $1$, resp. $2$, so it must contain the desired integer.

Using the acquired necessary bounds and this theorem, we have a method for determining all $D$ such that all elements of $m\kO^+$ are represented as the sum of $5$ squares. For example, for $m=4$ we get

\begin{theorem} \label{theorem:m=4}
Let $K=\Q(\sqrt{D})$ with $D\geq2$ squarefree. Then every element of $4\kO^{+}$ is the sum of squares in $\kO$ if and only if $D \in \{2,3,5,6,7,10,11,13\}$.
\end{theorem}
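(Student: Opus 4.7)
The plan is to reduce Theorem~\ref{theorem:m=4} to a finite verification via Peters' criterion. First I would invoke Corollary~\ref{cor:bounds} with $m=4$: part~(a) rules out all squarefree $D \equiv 2,3 \pmod 4$ with $D \geq 36$, part~(b) rules out all squarefree $D \equiv 1 \pmod 4$ with $D \geq 100$, while parts~(c) and~(e) are vacuous since $m=4$ is even. This leaves only squarefree $D \leq 35$ in the first residue class and $D \leq 97$ in the second as candidates to inspect individually.

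Next I would exploit the fact that $\kO^+$ is additively generated by its indecomposable elements: $4\alpha$ is a sum of squares for every $\alpha \in \kO^+$ if and only if $4\beta$ is a sum of squares for every indecomposable $\beta \in \kO^+$, because any decomposition $\alpha/4 \in \kO^+$ into indecomposables $\beta_j$ lifts to a decomposition $4\alpha = \sum 4\beta_j$. Furthermore, if $4\beta$ is a sum of squares then so is $\varepsilon^2\cdot 4\beta = 4(\varepsilon^2\beta)$, so it suffices to check one indecomposable per orbit under $\langle \varepsilon^2 \rangle$. These orbit representatives form a finite, explicit list read off from the continued fraction of $\omega_D$ via the Dress--Scharlau description recalled in Section~\ref{sec:prelim}.

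For each of the eight candidate good values $D \in \{2,3,5,6,7,10,11,13\}$, the period of $\omega_D$ is short and the finite list of representative indecomposables $\beta$ is easily enumerated; for each $\beta$ one verifies Peters' criterion by exhibiting an integer $c$ (with the additional parity condition $c \equiv b \pmod 2$ when $D \equiv 1 \pmod 4$) inside the interval of Theorem~\ref{theorem:peters} applied to $\gamma = 4\beta$. Since $N(\gamma) = 16\,N(\beta)$ is comparatively large for these small $D$, the Peters interval is always wide enough to contain an admissible~$c$.

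For every other squarefree $D$ in the candidate range, I would exhibit a specific indecomposable $\beta$ such that the Peters interval for $\gamma = 4\beta$ contains no integer of the required form, certifying that $4\beta$ is not a sum of squares. The main obstacle is the bookkeeping: this covers roughly two dozen values of $D$, each requiring a concrete indecomposable together with a short norm computation. In practice the counterexamples come from the indecomposable whose conjugate has the smallest norm in its orbit, making the Peters interval too short to catch any integer; writing out the full table is routine but lengthy, and is exactly what the algorithm of Section~\ref{sec:alg} carries out uniformly.
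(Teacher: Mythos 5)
Your proposal is sound and would prove the theorem, but it follows a genuinely different route from the paper's: it is essentially the general algorithm of Section~\ref{sec:alg} specialized by hand to $m=4$. Two differences are worth noting. First, for the necessary direction the paper does not pass through Corollary~\ref{cor:bounds}; it applies Proposition~\ref{proposition:main} directly with $t=k=1$ (resp.\ $t=2$, $k=1$), which gives the much sharper bounds $D<16$ for $D\equiv 2,3\pmod 4$ and $D\leq 23$ for $D\equiv 1\pmod 4$, leaving only the four values $D\in\{14,15,17,21\}$ to be eliminated by an explicit element. Your bounds $D\leq 35$ and $D\leq 97$ leave roughly twenty-seven squarefree values each requiring an individual counterexample --- still finite, but considerably more bookkeeping than the paper needs. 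Second, for the sufficiency direction the paper does not reduce to indecomposables at all: for an arbitrary $\alpha=a+b\sqrt D\in\kO^+$ it notes that the Peters interval for $4\alpha$ fails to contain an admissible integer only when $a$ lies in finitely many bad residue classes modulo $D$ and $N(\alpha)$ is small, and then the congruence $N(\alpha)\equiv a^2\pmod D$ forces $N(\alpha)$ to be large enough in exactly those classes; this avoids computing any continued fractions or indecomposables. Your reduction to indecomposables modulo conjugation and multiplication by $\varepsilon^2$ is valid and buys uniformity (it is what makes the general algorithm work), at the cost of requiring the continued-fraction data for each of the eight good fields. One caveat on your sketch: the claim that for the good $D$ the Peters interval for $4\beta$ is ``always wide enough'' is not literally true --- for $D=11$ and the indecomposable $\beta=1$ the interval is $[0,4/11]$, of length less than $1$ --- it merely happens to contain an admissible $c$ in every case, so the finite verification must genuinely be carried out element by element rather than dismissed by a length estimate.
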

\begin{proof}
``$\Rightarrow$'' Assume all elements of  $4\kO^{+}$ can be represented as the sum of squares in $\kO$.

At first, consider $D\equiv 2,3 \pmod{4}$. Using Proposition \ref{proposition:main} for $t=1$, $k=1$, we get the bound $D < 16$. Furthermore, it can be easily verified that for $D \in \{14,15\}$, the element $4(\floor{\sqrt{D}}+1+\sqrt{D})$ does not satisfy the condition in Theorem $\ref{theorem:peters}$.

If $D\equiv 1 \pmod{4}$, we obtain the bound $D \leq 23$ similarly. However, the values $D\in\{17,21\}$ can be again excluded using the element $4\left(\floor*{\frac{1+\sqrt{D}}{2}} + \frac{1+\sqrt{D}}{2}\right)$.

``$\Leftarrow$'' Consider $D\equiv 2,3 \pmod{4}$ and first, let's look at the general idea of the proof. Consider any element $\alpha = a+b\sqrt{D} \in \kO^{+}$. Using Theorem \ref{theorem:peters}, $4\alpha$ is the sum of squares if and only if there exists a rational integer in the interval

\[\left[\frac{2a-2\sqrt{N(\alpha)}}{D},\frac{2a+2\sqrt{N(\alpha)}}{D}\right].\]
If $4\sqrt{N(\alpha)}\geq D$, this is clearly true. It is, therefore, enough to consider only the elements of small norm (which can be, in general, done by solving finitely many 
generalized Pell equations).
\begin{enumerate}
    \item $D=2,3$. In this case, $4\sqrt{N(\alpha)}\geq D$ is always true.
    \item $D=6$. In this case, the interval $\left[\frac{a-\sqrt{N(\alpha)}}{3},\frac{a+\sqrt{N(\alpha)}}{3}\right]$ always contains an integer since $N(\alpha)\geq1$.
    \item $D=7$. It can be seen that the only potentially problematic case is $a\equiv \pm 2 \pmod{7}$ (otherwise, $\frac{2a}{7}$ is at most $\frac 2 7$ away from the nearest integer). However, in that case, we get $N(\alpha)\equiv a^2 \equiv 4 \pmod{D}$ and $N(\alpha)\geq 4$ is indeed enough.
    \item $D=10$. In this case, only $a\not\equiv\pm1\pmod{5}$ and $N(\alpha)\leq 3$ would cause a problem. Analogously, $a\not\equiv\pm1\pmod{5}$ implies $a^2\equiv N(\alpha) \equiv -1 \pmod{5}$, which results in $N(\alpha)\geq 4$.
    \item $D=11$. In this case $a\equiv \pm 2 \pmod{11}$ causes a problem for $N(\alpha)\leq 3$, $a\equiv \pm 3$  for $N(\alpha)\leq 6$, $a\equiv \pm 4$  for $N(\alpha)\leq 2$. All these cases can be dealt with in the same fashion as above.
\end{enumerate}
Consider $D \equiv 1 \pmod{4}$, $\alpha = a+b\omega_D \in \kO^{+}$. By Theorem \ref{theorem:peters}, we need to prove there is an even integer in the interval 
\[\left[\frac{4(2a+b)-4\sqrt{4N(\alpha)}}{D},\frac{4(2a+b)+4\sqrt{4N(\alpha)}}{D}\right],\]
which is equivalent to the existence of an integer in the interval
\[\left[\frac{2(2a+b)-2\sqrt{4N(\alpha)}}{D},\frac{2(2a+b)+2\sqrt{4N(\alpha)}}{D}\right],\]
For $D=5$, this is clear. The only remaining case is $D=13$.

Here, $4N(\alpha) = 4(a+\frac b 2)^2-4(\frac{b\sqrt{D}}{2})^2 = (2a+b)^2-b^2D\equiv (2a+b)^2 \pmod{D}$.

The only problematic residues are:
\begin{enumerate}
    \item $(2a+b)\equiv \pm 3\pmod{13}$ and $4N(\alpha)\leq 8$ -- not possible using the congruence above, 
    \item $(2a+b)\equiv \pm 4\pmod{13}$ and $4N(\alpha)\leq 6$ -- not possible using the congruence above and the fact that $N(\alpha)$ is an integer.
\end{enumerate}
\end{proof}
In the second part of the proof, we dealt with the bad cases by clever manipulations with congruences and inequalities. This option might not be viable in the general case, but we can always solve corresponding Pell equations to get the elements of the given norm and get the desired congruences using their well-known structure and recurrence relations. On the other hand, if $D$ does not meet the conditions, we will inevitably find a counterexample during this process.

Using this technique, one can (dis)prove the given statement for arbitrarily chosen $m\kO^{+}$ and $D$. However, it gets progressively more tedious with increasing $D$ and heavily relies on the ability to quickly solve the generalized Pell equation. In the next section, we will introduce an improved algorithm that uses indecomposable elements of $\kO^+$ to avoid this issue.

\section{The indecomposables}\label{sec:indecom}

The characterization of indecomposable elements of $\kO^+$ in quadratic fields can be used to show a few concrete results and also a general algorithm for solving the given problem.

The main idea is that instead of considering all elements of $\kO^+$, we can just look at the indecomposables since if $a$ and $b$ can be both represented as the sum of squares, so can $a+b$. For $D$ of a specific form, the indecomposables have such a nice structure that it can be used in combination with Theorem \ref{theorem:peters} to fully characterize which $m$ satisfy the given statement. One easy consequence of the following theorems is that bounds in Corollary \ref{cor:bounds} are, in some sense, optimal.

\begin{theorem}\label{theorem:t2-1}
Let $K=\Q(\sqrt{D})$ with squarefree $D=t^2-1$ for some even integer $t>1$. For a positive rational integer $m$, the following are equivalent:
\begin{enumerate}
    \item All elements of $2m\kO^{+}$ are represented as the sum of squares in $\kO$.
    \item $m=(t-1)k+l$ for some $k\geq0$ and $0\leq l \leq 2k$.
\end{enumerate}

\end{theorem}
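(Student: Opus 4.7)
The plan is to reduce the problem to checking indecomposables of $\kO^+$, describe them explicitly, and then apply Theorem \ref{theorem:peters}. Since $D = t^2-1 \equiv 3 \pmod 4$ with $t$ even, we have $\omega_D = \sqrt D$, and one checks that the continued fraction expansion is $\sqrt{t^2-1} = [t-1;\overline{1,\,2(t-1)}]$, giving period $s=2$ and fundamental unit $\varepsilon = t+\sqrt D$ (totally positive because $s$ is even, and of norm $1$). Because $u_{2j+1} = 1$ for all $j\geq 0$, the description of indecomposables in Section \ref{sec:prelim} collapses: for each odd $i\geq -1$ the only allowed $r$ is $0$, so the indecomposables are $\alpha_{-1}=1$ and $\alpha_{2j+1}$ for $j\geq 0$, which by $\alpha_{i+s}=\varepsilon\alpha_i$ equal $\varepsilon^{j+1}$. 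Hence the indecomposables of $\kO^+$ are exactly $\varepsilon^k$ for $k\geq 0$ together with the conjugates $(\varepsilon')^k$. Since sums of squares are closed under addition and under conjugation, $2m\kO^+$ consists of sums of squares iff $2m\varepsilon^k$ is a sum of squares for every $k\geq 0$.

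Now write $\varepsilon^k = A_k + B_k\sqrt D$, so $2m\varepsilon^k = 2mA_k + 2(mB_k)\sqrt D$ and $N(2m\varepsilon^k) = 4m^2$. Theorem \ref{theorem:peters} therefore demands an integer $c$ in the interval
\[
\left[\frac{m(A_k-1)}{D},\,\frac{m(A_k+1)}{D}\right].
\]
From the minimal polynomial $x^2-2tx+1$ of $\varepsilon$ one gets the recurrence $A_{k+1} = 2tA_k - A_{k-1}$ with $A_0 = 1$, $A_1 = t$. Reducing modulo $D = t^2-1$ and using $t^2\equiv 1\pmod D$, a direct induction yields
\[
A_{2j} \equiv 1 \pmod D, \qquad A_{2j+1}\equiv t \pmod D.
\]
For even $k$, $(A_k-1)/D$ is already an integer lying in the interval, so the condition is automatic. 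For odd $k$, writing $A_k = t + Ds_k$, the interval becomes $ms_k + [m/(t+1),\,m/(t-1)]$ (using $D = (t-1)(t+1)$), so translating we need an integer in $[m/(t+1),\,m/(t-1)]$, a condition that no longer depends on $k$.

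Finally, an integer $c$ satisfies $m/(t+1)\leq c\leq m/(t-1)$ iff $c(t-1)\leq m\leq c(t+1)$, which, setting $k:=c\geq 0$ and $l:=m-k(t-1)$, reads $m = (t-1)k + l$ with $0\leq l\leq 2k$; conversely such a decomposition makes $c = k$ an integer in the interval. Combined with the trivial even-$k$ case, this establishes the equivalence (a)$\Leftrightarrow$(b).

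The substantive work is organizational rather than deep: one must (i) verify the continued fraction and fundamental unit, (ii) extract the indecomposable structure from Section \ref{sec:prelim} via the collapse $u_{2j+1}=1$, and (iii) establish the mod-$D$ pattern of $A_k$ to reduce infinitely many Peters intervals to the single arithmetic condition in~(b). The only mildly delicate step is recognising that the condition is genuinely independent of the odd exponent $k$, which is why the clean characterisation in~(b) appears.
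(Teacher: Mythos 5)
Your proposal is correct, and its skeleton (reduce to indecomposables, identify them as the powers $\varepsilon^k$ of $\varepsilon=t+\sqrt{D}$ via the continued fraction $[t-1;\overline{1,2(t-1)}]$, then apply Theorem \ref{theorem:peters} and land on the interval $[m/(t+1),m/(t-1)]$) matches the paper's. The one genuine divergence is how you dispose of the infinitely many powers of $\varepsilon$: the paper reduces everything to the single element $2m\alpha_1=2m\varepsilon$ by a multiplicative argument --- even powers of $\varepsilon$ are perfect squares, and a sum of squares times a square (or another sum of squares) is again a sum of squares --- whereas you test each $2m\varepsilon^k$ against Peters directly and kill the $k$-dependence with the congruences $A_{2j}\equiv 1$, $A_{2j+1}\equiv t\pmod D$, which shift the Peters interval by the integer $ms_k$. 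Both routes are sound; the paper's trick is shorter and needs no computation with $A_k$, while yours is more mechanical and would generalize to situations where the indecomposables are not all powers of a single unit of norm $1$ (e.g.\ when extra $\alpha_{i,r}$ with $r>0$ appear), at the cost of the extra induction mod $D$. Your final translation of ``$\exists$ integer in $[m/(t+1),m/(t-1)]$'' into condition (b) is also slightly cleaner than the paper's division-with-remainder substitution.
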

\begin{proof}
In all cases, $D\equiv 3 \pmod{4}$. 
If we prove that for all indecomposable elements $\alpha \in \kO^{+}$, $2m\alpha$ can be represented as sum of squares in $\kO$, then obviously all elements of $2m\kO^{+}$ can be represented.

The continued fraction representation of $\sqrt{t^2-1}$ is $[t-1,\overline{1,2(t-1)}]$. For $i\geq -1$, define a sequence $(\alpha_i)$ by $\alpha_{-1} = 1$, $\alpha_{0} = t-1 +\sqrt{D}$, $\alpha_{i+2} = \alpha_{i+1}+\alpha_i$ for odd $i\geq -1$ and $\alpha_{i+2} = 2(t-1)\alpha_{i+1}+\alpha_i$ for even $i\geq 0$.

Using the facts mentioned in Section \ref{sec:prelim}, all indecomposable elements in $\kO^{+}$ are in this case  $\alpha_i$ with odd $i\geq -1$, together with their conjugates. If $\alpha_i$ can be represented as the sum of squares, then its conjugate can be represented as well (just replace all the squared numbers by their conjugates), so we can consider only $\alpha_i$.

It can be easily proven by induction that $\alpha_{2k+1} = \alpha_1^{k+1} = (t+\sqrt{t^2-1})^{k+1}$ either using the recurrence relations above or using facts from Section \ref{sec:prelim} ($\alpha_1 = \varepsilon$, $\alpha_{i+2} = \varepsilon \alpha_i$)  

The important consequence it that $2m\alpha_{2k+1}$ can be represented as the sum of squares for all $k\geq -1$ if and only if $2m\alpha_1$ can be represented as the sum of squares. One implication is trivial and the other one comes from the following facts. If $k$ is odd, then $\alpha_{2k+1} = \left(\alpha_1^{\frac{k+1}{2}}\right)^2$, so its is already a square in $\kO$. If $k$ is even $2m\alpha_{2k+1} = 2m\alpha_1 \cdot \alpha_{2k-1}$, both factors can be represented as the sum of squares in $\kO$ so the product can be represented as well.

By Theorem \ref{theorem:peters}, $2m\alpha_1 = 2m(t+\sqrt{t^2-1})$ can be represented as the sum of squares if and only if there is a rational integer in the interval
\[\left[\frac{2mt-\sqrt{N(2m\alpha_1)}}{2D},\frac{2mt+\sqrt{N(2m\alpha_1)}}{2D}\right] = \left[\frac{m}{t+1},\frac{m}{t-1}\right].\]
By substituing $m=(t-1)k+l$ for some integers $k\geq 0$, $0\leq l < t-1$, it can be seen that this is true if and only if $l=0$ or $\frac{m}{t+1}\leq k$, which is equivalent to $l\leq 2k$.
\end{proof}

The biggest $m$ not satisfying the condition $(b)$ is $m = (t-1)\frac{t-4}{2}+t-2 = \frac{t^2-3t}{2}$, meaning there exist infinitely many pairs $(D,m) = (t^2-1,\frac{t^2-3t}{2})$ such that not all elements of $2m\kO^+$ are represented as the sum of squares. This shows the sufficient bound $D \leq \frac{m}{2}$ in Corollary \ref{cor:bounds}(d) is quite precise. 

On the other hand, the smallest non-zero $m$ satisfying the condition is $m = t-1$, therefore all elements of $2m\kO^+$ are represented as the sum of squares for $D = (m+1)^2+1$ (if this $D$ is squarefree to be precise). This, again, shows that the necessary bound $D<(m+4)^2$ from Corollary \ref{cor:bounds}(a) is somewhat accurate.

\begin{theorem}\label{theorem:2t-4}
Let $K=\Q(\sqrt{D})$ with squarefree $D=(2t+1)^2-4$ for some integer $t>1$. For positive rational integer $m$, all elements of $m\kO^{+}$ are represented as the sum of squares in $\kO$ if and only if $m$ is one of the following form:
\begin{enumerate}
    \item $m=(4t-2)k+l$ for some $k\geq0$ and $0\leq l \leq 8k$ for $l$ even,
    \item  $m=(4t-2)k+l$ for some $k\geq0$ and $0\leq l \leq 8k-2t-3$ for $l$ odd,
    \item $m=(4t-2)k+l$ for some $k\geq0$ and  $2t-1\leq l \leq 8k+2t+3$ for $l$ odd. 
\end{enumerate}
\end{theorem}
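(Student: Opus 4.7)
The plan is to follow the template of Theorem \ref{theorem:t2-1}: work out the continued fraction of $\omega_D$, read off the indecomposables, reduce the problem to a single ``base'' element, and finish with Peters' Theorem \ref{theorem:peters}. Note first that $D = (2t+1)^2 - 4 \equiv 1 \pmod{4}$, so $\omega_D = (1+\sqrt{D})/2$, and that $D = (2t-1)(2t+3)$; this factorization is what ultimately makes the Peters interval collapse.

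A direct calculation shows $\omega_D = [t;\overline{1,\,2t-1}]$, so the period is $s = 2$. Using the recurrence (\ref{reccurCF}) with the standard initial data one finds $\alpha_1 = (2t+1+\sqrt{D})/2$, which has norm $((2t+1)^2-D)/4 = 1$ and hence is the fundamental unit $\varepsilon$. Because every partial quotient $u_{i+2}$ with odd $i \geq -1$ equals $1$, the characterization from Section \ref{sec:prelim} forces $r = 0$ everywhere; together with $\alpha_{i+2} = \varepsilon\alpha_i$ this shows that the indecomposables of $\kO^+$ are, up to Galois conjugation, precisely the non-negative powers $1,\varepsilon,\varepsilon^2,\ldots$ of $\varepsilon$. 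A short reduction then collapses all of these to a single condition: for even $k = 2j$, $m\varepsilon^{2j} = m(\varepsilon^j)^2$ is automatically a sum of squares by Lagrange applied in $\Z$, whereas for odd $k = 2j+1$, $m\varepsilon^{2j+1} = (m\varepsilon)(\varepsilon^j)^2$ is a sum of squares iff $m\varepsilon$ is. The whole problem therefore reduces to deciding when $m\varepsilon$ is a sum of squares in $\kO$.

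For this I would apply Theorem \ref{theorem:peters} to $m\varepsilon = mt + m\omega_D$ (so $a = mt$, $b = m$, $N(m\varepsilon) = m^2$). Using $D = (2t-1)(2t+3)$ the Peters interval telescopes to
\[
\left[\frac{m}{2t+3},\;\frac{m}{2t-1}\right],
\]
and one needs an integer $c \equiv m \pmod{2}$ inside. Writing $m = (4t-2)k + l$ with $0 \leq l < 4t-2$ rewrites the endpoints as $2k + (l-8k)/(2t+3)$ and $2k + l/(2t-1)$. The bound $l < 4t-2$ rules out $c = 2k \pm 2$ and $c = 2k+3$, while $c = 2k-3$ is dominated by $c = 2k-1$, so only $c \in \{2k-1,\,2k,\,2k+1\}$ can actually lie in the interval. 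Imposing the parity condition $c \equiv l \pmod 2$ together with the two endpoint inequalities on each of the three candidates produces exactly the three cases: $c = 2k$ gives (a), $c = 2k-1$ gives (b), and $c = 2k+1$ gives (c).

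The main obstacle is the bookkeeping in this final step — one must carefully check that the three candidates for $c$ really do exhaust all possibilities, handle the edge case $k = 0$ (where some of the conditions become vacuous), and match the resulting inequalities to the form stated in the theorem; otherwise no new ideas beyond those of Theorem \ref{theorem:t2-1} are needed.
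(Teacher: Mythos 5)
Your proposal is correct and follows essentially the same route as the paper: compute $\omega_D=[t;\overline{1,2t-1}]$, observe that the indecomposables are (up to conjugation) the powers of $\varepsilon=(2t+1+\sqrt{D})/2$, reduce to $m\varepsilon$, and apply Peters' theorem to get the interval $\left[\frac{m}{2t+3},\frac{m}{2t-1}\right]$ with the parity condition, then case-split on the candidates $2k-1$, $2k$, $2k+1$ exactly as in the paper's case analysis. The only remaining detail, which the paper dispatches in one sentence, is checking that the stated conditions remain valid when $l\geq 4t-2$, i.e.\ that the representation $m=(4t-2)k+l$ in the theorem need not be the reduced one.
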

\begin{proof}
The proof is quite similar to the proof of the previous theorem. This time, $D\equiv 1 \pmod{4}$ and $\omega_D = \frac{1+\sqrt{D}}{2} = [t,\overline{1,2t-1}]$ and again it holds that indecomposable elements are exactly powers of fundamental unit $ \alpha_1 = t+\omega'_D = t+\frac{-1+\sqrt{D}}{2}$.

So it is equivalent to looking at when $m\alpha_1$ is the sum of squares, which, using Theorem \ref{theorem:peters}, is exactly if and only if there is an integer $n\equiv m \pmod{2}$ in the interval 
\[\left[\frac{2mt+m-2m}{D},\frac{2mt+m+2m}{D}\right] = \left[\frac{m}{2t+3},\frac{m}{2t-1}\right].\]
Let $m = k(4t-2)+l$ for $k\geq 0$ and $4t-2 > l\geq 0$. Then 
\[\left[\frac{m}{2t+3},\frac{m}{2t-1}\right] = \left[\frac{m}{2t+3},2k+\frac{l}{2t-1}\right].\]
If $l$ is even, so is $m$, and the condition is satisfied if and only if $\frac{m}{2t+3}\leq 2k$, which is equivalent to $l\leq 8k$. 

If $l$ is odd and $l<2t-1$, one needs $\frac{m}{2t+3}\leq 2k-1$, equivalently $l \leq 8k-2t-3$. If $l\geq 2t-1$, only $\frac{m}{2t+3}\leq 2k+1$ is needed, which is equivalent to $l \leq 8k+2t+3$.

One can see that $m = k(4t-2)+l$ with the discussed restrictions satisfy the interval condition even if we omit $l<4t-2$. This concludes the proof.
\end{proof}

The biggest even $m$ and the biggest odd $m$ not satisfying the conditions are both in the set $\{2t^2-t-2$, $2t^2-3t-1\}$. This, again, shows that the sufficient bound $D \leq 2m$ in $\ref{cor:bounds}(d)$ is quite sharp.

Regarding the smallest even $m$ satisfying the conditions, we get $m=4t-2$, while the smallest odd $m$ is $m = 2t-1$. These are in close agreement with  the necessary condition $D < (\frac{m}{2}+8)^2$, resp. $D < (m+4)^2$, from Corollary $\ref{cor:bounds}(b,c)$.

A similar approach could be naturally extended to other quadratic families, resulting in a system of (in)equalities that characterize all suitable $m$.

\section{Algorithmic solution}\label{sec:alg}

The methods used in the previous section can be generalized to construct an algorithm that determines if all elements of $m\kO^+$ are represented as the sum of squares. As was mentioned, one only needs to consider $m$-multiples of the indecomposables. If $s$ is the period of the continued fraction of $D$, then $\varepsilon = \alpha_{s-1}$ is the fundamental unit and $\alpha_{i+s} = \varepsilon\alpha_i$. Therefore, there are only finitely many indecomposable elements (resp. their $m$-multiples) up to conjugation and multiplication by $\varepsilon^2$. If and only if all of them are represented, so are all elements of $m\kO^+$. And we can easily check each of the elements using Theorem \ref{theorem:peters}. 

Unique indecomposable elements in $\kO^+$ (uniqueness in the sense of the previous paragraph) are exactly $\alpha_{i,r}$ with odd $-1 \leq i \leq 2s-3$ and $0\leq r \leq u_{i+2}-1$. Therefore, the number of unique indecomposables is $\sum_{i=1}^s u_{2i-1}$. Kala and Blomer \cite[Theorem 2]{indBK} showed this sum to be $O(\sqrt{D}(\log(D))^2)$, which is therefore also the time complexity of this algorithm for single $m$ and $D$. 

Let's look at the time complexity for determining all $D$ satisfying the conditions for fixed $m$. At first, consider only $D \equiv 2,3 \pmod{4}$ and assume $D$ satisfies the conditions. Necessary bounds in Theorem $\ref{theorem:main}$ imply $D < \left(\frac{m}{2}+4\right)^2$ and also $D \not\in \left[\left ( \frac{m}{2i}+iC_1\right)^2, \left ( \frac{m}{2(i-1)}-(i-1)C_2\right)^2 \right]$. We consider this interval only for finitely many $i$ satisfying $ \frac{m}{2i}+iC_1 < \frac{m}{2(i-1)}-(i-1)C_2$ -- the largest one being $i_{max} \asymp m^{\frac{1}{3}}$ (here notation $f \asymp g$ means $C |g(x)| < |f(x)| < D |g(x)|$ for some positive constants $C,D$ and all sufficiently large $x$). Therefore, the smallest $D$ excluded by the intervals is $D_{min} \asymp m^{\frac{4}{3}}$. As a consequece, if $D$ satisfies the conditions, then either $D < D_{min}$ or $D$ lies in one of the intervals  $\left[\left(\frac{m}{2i}-iC_2\right)^2,\left(\frac{m}{2i}+iC_1\right)^2 \right]$ for $i < i_{max}$. In the first case we can estimate the time complexity as $\sum_{D<D_{min}} O(\sqrt{D}(\log(D))^2) = O(m^{\frac{4}{3}} \cdot m^{\frac{4}{6}}(\log(m^{\frac{4}{3}}))^2) = O(m^2(\log(m))^2)$. In the second case, we have $O(m^{\frac{1}{3}})$ intervals and the length of each interval is $O(m)$. Therefore, the estimation of time complexity in the second case is  $\sum_{i<i_{max}} O(m)\left(\frac{m}{2i} + iC_1\right)(2\log(\frac{m}{2i} + iC_1))^2 = O(m^2(\log(m))^2) \left( \sum_{i<i_{max}}\frac{1}{i}\right) + O(m(\log(m))^2)\left( \sum_{i<i_{max}}i \right) = O(m^2(\log(m))^3)$. In the last equality we used $\sum_{i=1}^n \frac{1}{i} = O(\log(n))$ (see e.g. \cite[Theorem 3.2]{apo}).

Overall, the time complexity for $D\equiv 2,3 \pmod{4}$ is $O(m^2(\log(m))^3)$ and analogous argument can be made for $D\equiv 1 \pmod{4}$. Hence, the time complexity of this algorithm (that for fixed $m$ determines for which $D$ all elements of $m\kO^+$ can be represented as the sum of squares) is also $O(m^2(\log(m))^3)$.

The table below shows results for a few small $m$. The implementation in C++ (available at \url{https://github.com/raskama/number-theory/tree/main/quadratic}) was used to obtain results for $m\leq 5000$.
These data, as well as generated graphs, can be found at
\url{https://www2.karlin.mff.cuni.cz/\~raskam/research/quad/}.

\begin{table}[H]
\centering
\begin{tabular}{|c||c|} 
 \hline
 $m$ & $D$ such that all elements of $m\kO^+$ are sums of squares \\ 
 \hline\hline
 1 & 5 \\ \hline
 2 & 2, 3, 5 \\ \hline
 3 & 5, 13, 17, 21 \\ \hline
 4 & 2, 3, 5, 6, 7, 10, 11, 13 \\ \hline
 5 & 5, 13, 17, 21, 29, 37  \\ \hline
6 & 2, 3, 5, 6, 7, 10, 11, 13, 14, 15, 17, 21, 26, 29, 33 \\ \hline
7 & 5, 13, 17, 21, 29, 33, 37, 41, 53, 61, 65, 77 \\ \hline
8 & 2, 3, 5, 6, 7, 10, 11, 13, 14, 15, 17, 19, 21, 22, 23, 26, 29, 31, 37, 38, 53 \\ \hline
9 & 5, 13, 17, 21, 29, 33, 37, 41, 53, 57, 61, 65, 69, 77, 85, 93, 101 \\ \hline
10 & \begin{tabular}{c} 2, 3, 5, 6, 7, 10, 11, 13, 14, 15, 17, 19, 21, 22, \\23, 26, 29, 30, 33, 34, 35, 37, 38, 41, 43, 53, 65, 85\end{tabular}
  \\ \hline
11 & 5, 13, 17, 21, 29, 33, 37, 53, 57, 65, 73, 77, 85, 101, 145, 165\\ \hline

\end{tabular}

\label{table:1}
\end{table}

\section{Acknowledgments}
I would like to thank V\'it\v{e}zslav Kala very much for introducing me to this engaging topic and for overall guidance and patience on this project.


\begin{thebibliography}{99}

\bibitem[Apo]{apo}
{\scshape Apostol, T. M.} (1976).  \emph{Introduction to Analytic Number Theory}  New York, NY: Springer-Verlag.

\bibitem[BK]{indBK}
{\scshape Blomer, V., Kala, V.} (2018).  On the rank of universal quadratic forms over real quadratic fields. \emph{Doc. Math.} 23: 15–34. \DOI{10.25537/dm.2018v23.15-34}

\bibitem[CP]{CP} {\scshape Cohn, H., Pall, G.} (1962). Sums of four squares in a quadratic ring. \emph{Trans. Amer. Math. Soc.} 105: 536–556. \DOI{10.1090/S0002-9947-1962-0142522-8}

\bibitem[DS]{Sindecomp} 
{\scshape Dress, A., Scharlau, R.} (1982). Indecomposable totally positive numbers in real quadratic orders. \emph{J. Number Theory}. 14(3): 292-306. \DOI{10.1016/0022-314X(82)90064-6}

\bibitem[HK]{indecompos} 
{\scshape Hejda, T., Kala, V.} (2020).  Additive structure of totally positive quadratic integers. \emph{Manuscripta Math}. 163: 263-278. \DOI{10.1007/s00229-019-01143-8}


\bibitem[KRS]{biquad} 
{\scshape Kr\'asensk\'y, J.,
Ra\v{s}ka, M., Sgallov\'a, E.} (2022). Pythagoras numbers of orders in biquadratic fields. \emph{Expo. Math.} \DOI{10.1016/j.exmath.2022.06.002}


\bibitem[KT]{cubic} 
{\scshape Kala, V., Tinkov\'a, M.} (2022). Universal quadratic forms, small normes and traces over families of number fields. \emph{Int. Math. Res. Not. IMRN.} \DOI{10.1093/imrn/rnac073}


\bibitem[KY]{sosil}
{\scshape Kala, V., Yatsyna, P.} (2020). 
 Sums of squares in $S$-integers. \emph{New York J. Math.} 26: 1145-1154.

\bibitem[Ma]{mass} {\scshape Maa\ss, H.} (1941).  \"Uber die Darstellung total positiver Zahlen des K\"orpers $R(\sqrt{5})$ als Summe von drei Quadraten. \emph{Abh.
Math. Sem. Univ. Hamburg}. 14: 185-191. \DOI{10.1007/BF02940744}


\bibitem[Pe]{per} 
{\scshape Perron O.} (1913).  \emph{Die Lehre von den Kettenbr{\"u}chen, 1st edn}. Leipzig: B. G. Teubner Verlag.

\bibitem[Pet]{P} {\scshape Peters,  M.} (1973).  Quadratische {F}ormen \"{u}ber {Z}ahlringen. \emph{Acta Arith}. 24: 157-164.  \DOI{10.4064/aa-24-2-157-164}

\bibitem[Si1]{siegel} {\scshape Siegel, C. L.} (1921). Darstellung total positiver Zahlen durch Quadrate. \emph{Math. Z.} 11: 246-275. \DOI{10.1007/BF01203627}

\bibitem[Si2]{Si} {\scshape Siegel, C. L. } (1945).  Sums of m-th powers of algebraic integers. \emph{Ann. of Math.} 46: 313-339. \DOI{10.2307/1969026}

\bibitem[Ti]{tin} 
{\scshape  Tinkov\'a, M. } On the Pythagoras number of the simplest cubic fields. Preprint: \href{https://arxiv.org/abs/2101.11384}{ 	arXiv:2101.11384}.

\end{thebibliography}
\end{document}